\documentclass[12pt,reqno]{amsart}
\usepackage{fullpage}
\usepackage[top=1.5in, bottom=2in, left=2.5in, right=2.5in]{geometry}
\usepackage{amssymb,amsmath,amsthm}
\usepackage{comment,enumitem,url}
\usepackage{xcolor}
\usepackage{enumitem}
\usepackage{bm}
\usepackage{float}

\usepackage{graphicx}
\usepackage[draft]{hyperref}
\usepackage{amsmath,amsopn,amssymb,amsfonts}
\usepackage{verbatim}
\usepackage{amsthm}
\usepackage{mathtools}
\usepackage{color}
\usepackage{enumitem}
\usepackage[framemethod=TikZ]{mdframed}
\usepackage{bbm}
\usepackage{mathrsfs}
\usepackage{booktabs}
\usepackage{caption}
\usepackage{cancel}
\usepackage{cleveref,tikz-cd}

\renewcommand{\H}{\mathbb{H}}

\newcommand{\sgn}{\mbox{sgn}}

\newcommand{\SL}{\mathrm{SL}}

\newcommand{\N}{\mathbb N}
\newcommand{\C}{\mathbb C}

\theoremstyle{plain}
\newtheorem{thm}{Theorem}[section]

\newtheorem{lem}[thm]{Lemma}

\newtheorem*{rem}{Remark}

\theoremstyle{definition}

\newtheorem*{defn*}{Definition}

\numberwithin{equation}{section}
\numberwithin{thm}{section}

\setlist[enumerate]{leftmargin=*,label=\rm{(\arabic*)}}

\renewcommand{\sgn}{\textnormal{sgn}}

\def\del{ \partial}

\newcommand{\re}{{\rm Re}}

\renewcommand{\sgn}{{\rm sgn}}
\newcommand{\R}{\mathbb R}
\newcommand{\Z}{\mathbb Z}

\setlist[itemize]{noitemsep, topsep=0pt}

\allowdisplaybreaks

\makeatletter
\newcommand{\vast}{\bBigg@{4}}
\newcommand{\Vast}{\bBigg@{5}}
\makeatother

\renewcommand{\pmod}[1]{\ \left( \mathrm{mod} \, #1 \right)}

\title{A direct proof of Mono--Rolen--Stumpenhusen and new constructions via the Maass raising operators}

\author{Kathrin Bringmann}
\address{Department of Mathematics and Computer Science, Division of Mathematics, University of Cologne, Weyertal 86--90, 50931 Cologne, Germany}
\email{kbringma@math.uni-koeln.de}
\author{Ben Kane}
\address{The University of Hong Kong, Department of Mathematics, Pokfulam, Hong Kong}
\email{bkane@hku.hk}

\date{\today}

\makeatletter
\@namedef{subjclassname@2020}{\textup{2020} Mathematics Subject Classification}
\makeatother

\subjclass[2020]{11F11, 11F37}

\keywords{divisor modular forms, hyperbolic Poincar\'e series, Maass raising and lowering operators, Shimura and Shintani lifts kernel function}

\begin{document}
	
	\begin{abstract}
		In this paper, we give a direct conceptual proof of the main result of Mono, Rolen, and Stumpenhusen, using differential operators. More precisely, we realize their functions $\omega_{k+1,D}$ as images of the quadratic form Poincar\'e   series $f_{k,D}$ under the Maass raising operator. This perspective gives a natural explanation for the modularity and Laplace eigenvalue properties of $\omega_{k+1,D}$. We further extend these results by investigating the images of more general local Maass forms under the Maass raising and lowering operators.
	\end{abstract}
	\maketitle
	
	\section{Introduction and statements of results}
Recently, Mono, Rolen, and Stumpenhusen \cite{MRS} studied (for $k\in \mathbb{N}$)
	\begin{equation*}
		\omega_{k+1,D}(z) := \sum_{Q\in \mathcal Q_D} \frac{Q_z}{Q(z,1)^{k+1}}.
	\end{equation*}
Here, for $D>0$ a discriminant, $\mathcal Q_D$ denotes the set of integral binary quadratic form of discriminant $D$ and for $Q=[a,b,c]\in \mathcal Q_D$ ($z=x+iy$ with $x\in \R, y\in \R^{+}$ throughout), we let
	\begin{equation*}
		Q_z := \frac{1}{y} \left(a|z|^2+bx+c\right).
	\end{equation*}

These functions may be viewed as non-holomorphic analogues of the ``quadratic form Poincar\'e series" 
\begin{equation}\label{fk}
f_{k,D}(z):=\sum_{Q\in\mathcal{Q}_D} \frac{1}{Q(z,1)^k}.
\end{equation}
The functions $f_{k,D}$ were discovered by Zagier \cite{ZagierDoiNaganuma} while investigating the Doi--Naganuma lift. \!Kohnen and Zagier \cite{KohnenZagier} then showed that their generating function is the kernel function for the Shimura and Shintani lifts.

These kernel functions have further applications to central values of twists of $L$-functions.  To state the results from \cite{MRS}, let $\widehat{E}_2(z):=E_2(z)-\frac{3}{\pi y}$ denote the modular completion of the weight two Eisenstein series. For $\kappa\in\N_0 +\frac{1}{2}$ and $m\in\N$, denote the $m$-th weight Poincar\'e series in Kohnen's plus space on $\Gamma_0(4)$ by $P_{\kappa,m}^+$ (see \eqref{eqn:Poincaredef}). Following \cite{BKO}, we furthermore define the \begin{it}divisor modular form\end{it} for a weight $k$ meromorphic modular form $f$ by 
\[
f^{\mathrm{div}}(z):=\sum_{\tau\in\SL_2(\Z)\backslash\H} e_\tau H_{\tau}(z),
\]
where $e_\tau := \frac{1}{2}$ if $\tau=i$, $e_\tau:=\frac{1}{3}$ if $\tau=\rho := e^{\frac{\pi i}{3}}$, and $e_\tau:=1$ otherwise.
Moreover $H_{\tau}(z):=\frac{E_4^2(z)E_6(z)}{\Delta(z)} \frac{1}{j(z)-j(\tau)}$ with $j$ the modular $j$-invariant, $\Delta$ the modular discriminant, and $E_k$ the weight $k$ Eisenstein series.  Recall that weak Maass forms transform like modular forms but instead of being holomorphic they are eigenfunctions under the weight $\kappa$ Laplace operator $\Delta_\kappa$ (see Section \ref{sec:ModularForms}). The following properties of $\omega_{k+1,D}$ were proven in \cite[Theorem 1.1]{MRS}.
\begin{thm}\label{thm:MRS}
Let $k\ge2$.
\begin{enumerate}
\item The function $\omega_{k+1,D}$ is a weak Maass form of weight $2k+2$ with eigenvalue $2k$ under $\Delta_{2k+2}$. 
\item  We have the splitting, with $Q'(z, 1):=\frac{\del}{\del z} Q(z,1)$,
		\begin{equation*}
			\omega_{k+1,D}(z) = \frac{f_{k,D}(z)}{y} -i\sum_{Q\in\mathcal Q_D} \frac{Q'(z,1)}{Q(z,1)^{k+1}}.
		\end{equation*}
\item  We have
		\begin{equation*}
			\omega_{k+1,D} =\frac{2\pi}{k} f_{k,D}^{\mathrm{div}} f_{k,D} -\frac{\pi}{3} \widehat E_2f_{k,D}.
		\end{equation*} 
\item We have
		$$
		 \omega_{k+1,D}(z)=\frac{6(4\pi)^{k-\frac{1}{2}}}{\Gamma\left(k-\frac{1}{2}\right)}\left\langle P_{k+\frac{1}{2},D}^+,\Lambda_k(\,\cdot\,,-\overline z)\right\rangle,
		$$	
where $\langle \cdot,\cdot\rangle$ is the Petersson inner product defined in \eqref{Pet} and 
\[
\Lambda_k(\tau,z):=\sum_{D\ge 1}D^{k-\frac{1}{2}}	\omega_{k+1,D}(z) e^{2\pi iD\tau}
\]
satisfies weight $k+\frac{1}{2}$ modularity on $\Gamma_0(4)$ in $\tau$. 
\end{enumerate}
\end{thm}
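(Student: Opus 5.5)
The plan is to prove part (2) first, by a direct termwise identity. Then, reading part (2) as saying that $\omega_{k+1,D}$ is a constant multiple of the Maass raising operator $R_{2k}:=2i\frac{\del}{\del z}+\frac{2k}{y}$ applied to $f_{k,D}$, I would deduce (1), (3), (4) from known properties of $f_{k,D}$.

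\emph{Step 1 (part (2) and the raising identity).} For $Q=[a,b,c]$ we have $Q'(z,1)=2az+b$. Since $z^2-2iyz=z\overline z=|z|^2$ and $z-iy=x$,
\[
\frac{Q(z,1)}{y}-iQ'(z,1)=\frac{a(z^2-2iyz)+b(z-iy)+c}{y}=Q_z .
\]
Dividing by $Q(z,1)^{k+1}$ and summing over $\mathcal Q_D$ gives (2). Absolute convergence for $k\ge2$ justifies the termwise manipulation. On the other hand, $R_{2k}(Q(z,1)^{-k})=2k\bigl(\frac{1}{yQ(z,1)^k}-\frac{iQ'(z,1)}{Q(z,1)^{k+1}}\bigr)$. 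Hence
\[
\omega_{k+1,D}=\frac{1}{2k}R_{2k}(f_{k,D}).
\]

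\emph{Step 2 (part (1)).} The operator $R_\kappa$ maps weight $\kappa$ modular objects to weight $\kappa+2$ ones, so $\omega_{k+1,D}$ is modular of weight $2k+2$. For the eigenvalue I use the standard relations $-\Delta_\kappa=L_{\kappa+2}R_\kappa+\kappa$ and $\Delta_{\kappa+2}=-R_\kappa L_{\kappa+2}$. These give
\[
\Delta_{\kappa+2}R_\kappa=R_\kappa(\Delta_\kappa+\kappa).
\]
Since $f_{k,D}$ is holomorphic, $\Delta_{2k}f_{k,D}=0$, so the eigenvalue is $2k$. The moderate growth at the cusp follows because $f_{k,D}$ is a cusp form.

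\emph{Step 3 (part (3)).} Write $2i\frac{\del}{\del z}=-4\pi D$ with $D=\frac{1}{2\pi i}\frac{\del}{\del z}$, and use the Serre derivative $\vartheta f:=Df-\frac{2k}{12}E_2f$. Then
\[
\frac{R_{2k}f}{2k}=-\frac{2\pi}{k}\vartheta f-\frac{\pi}{3}\Bigl(E_2-\frac{3}{\pi y}\Bigr)f .
\]
To finish I invoke the theorem of Bruinier--Kohnen--Ono \cite{BKO}, namely $\vartheta f/f=-f^{\mathrm{div}}$, where $f^{\mathrm{div}}$ is the divisor form with multiplicities $\mathrm{ord}_\tau(f)$. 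This gives $-\vartheta f_{k,D}=f^{\mathrm{div}}_{k,D}f_{k,D}$. The point I expect to need care is matching the normalization of $f^{\mathrm{div}}$ in the definition above with BKO's, namely the multiplicities and the contribution from the cusp.

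\emph{Step 4 (part (4)).} I start from the Kohnen--Zagier kernel identity. It expresses $f_{k,D}(z)$ as a constant times $\langle P^+_{k+\frac12,D},\Omega_k(\cdot,-\overline z)\rangle$, where $\Omega_k(\tau,z)=\sum_{D\ge1}D^{k-\frac12}f_{k,D}(z)e^{2\pi iD\tau}$ is modular of weight $k+\frac12$ in $\tau$. Applying $\frac{1}{2k}R_{2k}$ in $z$ termwise gives $\Lambda_k$. Modularity of $\Lambda_k$ in $\tau$ is preserved, since $R$ acts only on $z$. I then differentiate under the inner product. The dependence on $-\overline z$ inside the antilinear slot becomes holomorphic in $z$ after conjugation, so $R_{2k}$ passes through. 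The main obstacle is justifying this interchange, via locally uniform convergence of the $\tau$-integral and the $D$-sum, and tracking the constant to get $\frac{6(4\pi)^{k-\frac12}}{\Gamma(k-\frac12)}$.
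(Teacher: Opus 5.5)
Your proposal is correct and follows the paper's argument essentially step for step: the termwise identity $\frac{Q(z,1)}{y}-iQ'(z,1)=Q_z$ yields both (2) and $\omega_{k+1,D}=\frac{1}{2k}R_{2k}(f_{k,D})$, from which (1) follows because $R_{2k}$ intertwines with the Laplacian, (3) follows from the Bruinier--Kohnen--Ono identity (you are right that $f^{\mathrm{div}}$ must carry the multiplicities $\mathrm{ord}_\tau(f)$, and no separate cusp term appears), and (4) follows by raising the Kohnen--Zagier identity in $z$. The only cosmetic difference is in (4): you raise the already-flipped form $\langle P^+_{k+\frac12,D},\Omega_k(\cdot,-\overline z)\rangle$, using that $R_{2k}$ commutes with $h\mapsto \overline{h(-\overline z)}$, whereas the paper raises $\langle \Omega_k(\cdot,z),P^+_{k+\frac12,D}\rangle$ and only at the end invokes conjugate symmetry together with $\overline{\omega_{k+1,D}(-\overline z)}=\omega_{k+1,D}(z)$.
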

In this paper, we give a conceptual explanation of Theorem \ref{thm:MRS} by relating $\omega_{k+1,D}$ and $f_{k,D}$ via the \begin{it}raising operator\end{it} $R_\ell := 2i\frac{\partial}{\partial z} + \frac{\ell}{y}$, which maps modular objects of weight $\ell$ to those of weight $\ell+2$. Our main theorem is the following simple identity.
\begin{thm}\label{prop:fkDraise}
For $k\geq 2$ and $D\in\Z$, we have 
\[
\omega_{k+1,D} = \frac{R_{2k}\left(f_{k,D}\right)}{2k}.
\]
\end{thm}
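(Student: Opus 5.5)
The plan is to apply the raising operator $R_{2k}=2i\frac{\partial}{\partial z}+\frac{2k}{y}$ to $f_{k,D}$ term by term and identify each resulting summand with the corresponding summand of $\omega_{k+1,D}$.

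\textbf{Justifying termwise differentiation.} For $k\ge 2$ and $D$ not a square, the series $\sum_{Q\in\mathcal Q_D}|Q(z,1)|^{-k}$ converges absolutely and locally uniformly on $\H$. The same holds for the differentiated series $\sum_Q |Q'(z,1)|\,|Q(z,1)|^{-k-1}$. For square $D$, or $D\le 0$, I would either note the same convergence or adopt whatever convention the paper uses for $f_{k,D}$ in those cases; for $D<0$ the sum should be interpreted with the poles at CM points, and the identity is still pointwise away from them. With this in hand it suffices to prove, for each fixed $Q=[a,b,c]$ of discriminant $D$,
\[
\frac{1}{2k}R_{2k}\left(Q(z,1)^{-k}\right)=\frac{Q_z}{Q(z,1)^{k+1}}.
\]

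\textbf{Local computation.} Since $Q(z,1)^{-k}$ is holomorphic, $\frac{\partial}{\partial z}$ acts as the ordinary derivative, so
\[
R_{2k}\left(Q(z,1)^{-k}\right)=-2ik\,Q'(z,1)Q(z,1)^{-k-1}+\frac{2k}{y}Q(z,1)^{-k}.
\]
Dividing by $2k$ gives
\[
\frac{1}{2k}R_{2k}\left(Q(z,1)^{-k}\right)=\frac{Q(z,1)-iyQ'(z,1)}{y\,Q(z,1)^{k+1}}.
\]
This immediately yields the splitting in Theorem~\ref{thm:MRS}(2). It therefore remains to verify the polynomial identity $Q(z,1)-iyQ'(z,1)=yQ_z=a|z|^2+bx+c$. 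Writing $z=x+iy$, we have $Q(z,1)=az^2+bz+c$ and $Q'(z,1)=2az+b$, so
\[
Q(z,1)-iyQ'(z,1)=az^2+bz+c-2iayz-iby=az(z-2iy)+b(z-iy)+c.
\]
Since $z-2iy=\overline z$ and $z-iy=x$, this equals $a|z|^2+bx+c$, as required.

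\textbf{Main obstacle.} The algebra is trivial, so the only real care needed is analytic: justifying the interchange of $R_{2k}$ with the infinite sum. This requires locally uniform convergence of the derivative series, which follows from standard bounds such as $|Q(z,1)|\gg_z \max(|a|,|b|,|c|)$ uniformly on compacta for $Q$ of fixed discriminant, together with the fact that $k\ge2$ makes $\sum_Q \max(|a|,|b|,|c|)^{-k}$ converge. It also requires handling the degenerate discriminants, where I would defer to the paper's definition of $f_{k,D}$.
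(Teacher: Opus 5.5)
Your proposal is correct and is essentially the paper's proof: the paper computes $R_{2k}\left(Q(z,1)^{-k}\right)=\frac{2kQ_z}{Q(z,1)^{k+1}}$ via the identity $\frac{Q(z,1)}{y}-iQ'(z,1)=Q_z$ (Lemma \ref{lem:fkDraise}), then applies it termwise using the absolute locally uniform convergence of the series defining $f_{k,D}$. Your separate bound on the differentiated series is valid but not needed: locally uniform convergence of a series of holomorphic functions already justifies termwise differentiation, by Weierstrass's theorem.
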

\begin{rem} 
	Since the Maass raising operator intertwines with the slash action and the action of the Laplace operator,  Theorem \ref{thm:MRS} $(1)$ follows immediately from Theorem \ref{prop:fkDraise}.
\end{rem}

Following \cite[(1.4)]{BKK}, we define 
\[
\mathcal{F}_{1-k,D}(z):=\frac{D^{\frac{1}{2}-k}}{\binom{2k-2}{k-1}\pi}\sum_{Q=[a,b,c]\in\mathcal{Q}_D} \sgn\left(Q_z\right)Q(z,1)^{k-1}\psi\left(\frac{Dy^2}{|Q(z,1)|^2}\right),
\]
where $\psi(v):=\frac{1}{2}\beta(v;k-\frac{1}{2},\frac{1}{2})$ with $\beta(v;s,w):=\int_{0}^{v} u^{s-1} (1-u)^{w-1} \quad (\re(w)>0)$ the \begin{it}incomplete beta function\end{it}. In \cite[Theorem 1.2]{BKK} it was shown that $\mathcal{F}_{1-k,D}$ is a weight $2-2k$ analogue of $f_{k,D}$ in the sense that it is almost everywhere a preimage under $\xi_{2-2k}$, where $\xi_{\kappa}:=2iy^{\kappa}\overline{\frac{\partial}{\partial\overline{z}}}$ is the $\xi$\begin{it}-operator\end{it}. In particular, for $z$ outside of the exceptional set 
\begin{equation*}
E_D:=\bigcup_{Q\in\mathcal{Q}_D}\left\{z\in\H: Q_z=0\right\}
\end{equation*}
we have\footnote{Note the different normalization of $f_{k,D}$ in \cite[(1.1)]{BKK}.}
\begin{equation}\label{Ver}
\xi_{2-2k}\left(\mathcal{F}_{1-k,D}(z)\right)= \frac{f_{k,D}(z)}{\binom{2k-2}{k-1}\pi}.
\end{equation}
Next define
\begin{equation}\label{eqn:Wdef}
\mathcal{W}_{-k,D}(z):=y^{2k}\overline{f_{k,D}(z)}.
\end{equation}
As we show in the next theorem, $\mathcal{W}_{-k,D}$ is a weight $-2k$ analogue of $\omega_{k+1,D}$: it is a preimage under $\xi_{-2k}$ and is related to $\mathcal{F}_{1-k,D}$ by $L_{2-2k}$, where $L_\kappa := -2iy^2\frac{\partial}{\partial \overline z}$ is the classical Maass \textit{lowering operator}.
\begin{thm}\label{prop:omegaNegProp}\hspace{0cm}

\begin{enumerate}[leftmargin=*]
\item
The function $\mathcal{W}_{-k,D}$ is a weight $-2k$ weak Maass form with eigenvalue $2k$. For $z\notin E_D$, we have
\[
\mathcal{W}_{-k,D}(z)=\binom{2k-2}{k-1}\pi L_{2-2k}\left(\mathcal{F}_{1-k,D}(z)\right).
\] 

\item
We have 
\[
\xi_{-2k}\left(\mathcal{W}_{-k,D}\right)=2k\omega_{k+1,D}.
\]
\end{enumerate}
\end{thm}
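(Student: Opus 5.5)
The plan is to deduce everything from the definition \eqref{eqn:Wdef}, the relation \eqref{Ver}, and Theorem \ref{prop:fkDraise}, together with standard operator identities. The basic tool is the identity
\[
\xi_\kappa\left(y^{\kappa}\overline{g}\right)=2iy^{\kappa}\,\overline{\frac{\partial}{\partial \overline z}\left(y^{\kappa}\overline g\right)}
\]
for holomorphic $g$. Since $\frac{\partial}{\partial\overline z}y^{\kappa}=\frac{i\kappa}{2}y^{\kappa-1}$, and since $y^\kappa\overline g$ with $g$ of weight $-\kappa$ has weight $\kappa$, these relations convert all the claims into direct computations with $f_{k,D}$.

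\textbf{Part (1), modularity.} Since $f_{k,D}$ is holomorphic of weight $2k$, the function $y^{2k}\overline{f_{k,D}}$ transforms with weight $-2k$. This follows from $\operatorname{Im}(\gamma z)^{2k}=y^{2k}|cz+d|^{-4k}$ and $\overline{(cz+d)^{2k}}\,|cz+d|^{-4k}=(cz+d)^{-2k}$.

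\textbf{Part (1), identity with the lowering operator.} For $z\notin E_D$, I would use the standard relation $L_\kappa(F)=y^{2-\kappa}\overline{\xi_\kappa(F)}$. This holds because $\xi_\kappa(F)=2iy^\kappa\overline{\partial_{\overline z}F}$, so $\overline{\xi_\kappa F}=-2iy^\kappa\partial_{\overline z}F$, and multiplying by $y^{2-\kappa}$ gives $L_\kappa F$. Applying this with $\kappa=2-2k$ and using \eqref{Ver} gives
\[
L_{2-2k}\mathcal F_{1-k,D}=\frac{y^{2k}\overline{f_{k,D}}}{\binom{2k-2}{k-1}\pi},
\]
which is exactly the claimed identity.

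\textbf{Part (1), eigenvalue.} I would write $\Delta_{\kappa}=-\xi_{2-\kappa}\circ\xi_\kappa$ (the paper's convention). First compute
\[
\xi_{-2k}(y^{-2k}G)=2iy^{-2k}\,\overline{\partial_{\overline z}(y^{-2k}G)},\qquad G:=y^{4k}\overline{f_{k,D}}.
\]
A cleaner route is to write $\mathcal W=y^{-2k}\overline{g}$ with $g=y^{2k}\cdot y^{2k}f$, but this becomes messy. It is easier to apply the Maass operator identities. The computation $\xi_{-2k}(\mathcal W_{-k,D})=-2ky^{-2k}\overline{\overline{\,\cdots}}$ is reduced to part (2), which gives
\[
\xi_{-2k}\mathcal W=2k\,\omega_{k+1,D}=R_{2k}f_{k,D}.
\]
Then
\[
\Delta_{-2k}\mathcal W=-\xi_{2+2k}\left(R_{2k}f_{k,D}\right).
\]
Using $\xi_{\kappa+2}R_\kappa=-\kappa\,\xi_\kappa$ up to the standard commutation (equivalently $L_{\kappa+2}R_\kappa=-\Delta_\kappa-\kappa$) together with $\xi_{2k}f_{k,D}=0$, the result reduces to showing
\[
\xi_{2k+2}(R_{2k}f)=y^{2k+2}\,\overline{\partial_{\overline z}\left(2k/y\right)}\cdot 2i\cdot\overline{f}.
\]
Since $\partial_{\overline z}(1/y)=\frac{-i}{2y^2}\cdot(-1)$, this gives a multiple of $y^{2k}\overline f$, and tracking constants should produce eigenvalue $2k$. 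Getting the constant right is the main bookkeeping step.

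\textbf{Part (2).} This is a direct computation:
\[
\partial_{\overline z}(y^{2k}\overline f)=\frac{ik}{y}\,y^{2k}\overline f+y^{2k}\,\overline{f'}.
\]
Hence
\[
\xi_{-2k}\mathcal W=2iy^{-2k}\left(-\frac{ik}{y}y^{2k}f+y^{2k}f'\right)=\frac{2k}{y}f+2if'=R_{2k}f.
\]
By Theorem \ref{prop:fkDraise} this equals $2k\,\omega_{k+1,D}$.

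The main obstacle is carefully tracking signs and constants of $i$ in these derivative computations, and verifying the eigenvalue sign convention for $\Delta_{-2k}$. I expect no convergence issues, since $f_{k,D}$ converges absolutely and locally uniformly for $k\ge2$, so termwise differentiation is justified.
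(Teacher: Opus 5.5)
Your modularity argument and your lowering identity are the paper's argument: Lemmas \ref{lem:conjmodular} and \ref{lem:xilower} together with \eqref{Ver}. Your direct computation in part (2) is correct and is exactly the content of Lemma \ref{lem:raisexi} with $\kappa=-2k$, since $y^{-2k}\overline{\mathcal{W}_{-k,D}}=f_{k,D}$. The paper reaches the same identity by a detour through $\mathcal{F}_{1-k,D}$ and \eqref{Ver}, followed by a continuity argument across $E_D$. Your route is shorter and never meets the exceptional set, because $\mathcal{W}_{-k,D}$ is defined directly from $f_{k,D}$. For the eigenvalue you also take a different route. The paper simply applies Lemma \ref{lem:Deltaconj} with $\lambda=0$ and $\kappa=2k$, whereas you factor $\Delta_{-2k}=-\xi_{2k+2}\circ\xi_{-2k}$ and feed in part (2). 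That route is legitimate and not circular, since your part (2) does not use part (1).

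However, the eigenvalue step as you wrote it does not go through, for two reasons.

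\emph{A false identity.} The relation $\xi_{\kappa+2}R_\kappa=-\kappa\,\xi_\kappa$ is wrong: for holomorphic $f_{k,D}$ it would give $\xi_{2k+2}(R_{2k}f_{k,D})=0$, i.e.\ a harmonic $\mathcal{W}_{-k,D}$. The correct relation, from Lemma \ref{lem:xilower} and \eqref{eqn:DeltaLR}, is
\[
\xi_{\kappa+2}(R_\kappa g)=y^{\kappa}\,\overline{L_{\kappa+2}R_\kappa g}=-y^\kappa\,\overline{(\Delta_\kappa+\kappa)g}.
\]

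\emph{A sign error.} In fact $\frac{\partial}{\partial\overline z}\bigl(\frac1y\bigr)=-\frac{i}{2y^2}$, not $+\frac{i}{2y^2}$; your own formula $\frac{\partial}{\partial\overline z}y^\kappa=\frac{i\kappa}{2}y^{\kappa-1}$ with $\kappa=-1$ shows this. With your sign, the computation yields eigenvalue $-2k$. Since the value of the eigenvalue is part of the claim, ``tracking constants should produce $2k$'' is not a proof.

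The fix is one line. Since $f_{k,D}$ and $f_{k,D}'$ are holomorphic,
\[
\frac{\partial}{\partial\overline z}\left(R_{2k}f_{k,D}\right)=2kf_{k,D}\frac{\partial}{\partial\overline z}\left(\frac1y\right)=-\frac{ik}{y^2}f_{k,D},
\qquad\text{so}\qquad
\xi_{2k+2}\left(R_{2k}f_{k,D}\right)=2iy^{2k+2}\cdot\frac{ik}{y^2}\overline{f_{k,D}}=-2k\,\mathcal{W}_{-k,D}.
\]
Therefore $\Delta_{-2k}(\mathcal{W}_{-k,D})=-\xi_{2k+2}(R_{2k}f_{k,D})=2k\,\mathcal{W}_{-k,D}$. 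Equivalently, cite Lemma \ref{lem:Deltaconj} as the paper does.
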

\begin{rem}
Theorem \ref{prop:omegaNegProp} \textup{(2)} implicitly appears as a termwise calculation in \cite[(3.2)]{MRS}. 
\end{rem}
Ignoring multiplicative constants, Theorem \ref{prop:fkDraise} with Theorem \ref{prop:omegaNegProp} can be combined to obtain the following commutative diagram outside $E_D$: 
\begin{figure}[htbp] \label{fig:figure1}
    \begin{center}
    \begin{tikzcd}[column sep=0.65in,row sep=0.65in]
        \mathcal{F}_{1-k,D} \arrow[d, "\xi_{2-2k}"] \arrow[rr, "L_{2-2k}"] & & y^{2k}\overline{f_{k,D}} \arrow[d, bend left=20, "\xi_{-2k}"] \\
        f_{k,D} \arrow[rr, "R_{2k}"]& & \omega_{k+1,D} \arrow[u, bend left=20, "\xi_{2k+2}"]
    \end{tikzcd}
    \end{center}
\end{figure}

In \cite[Theorem 1.1 (1)]{BKV}, $f_{k,D}$ was embedded into a family of local Maass forms $f_{k,s,D}$ (see \eqref{eqn:fksDdef} for the explicit definition) depending on an additional spectral parameter. Outside of the exceptional set $E_D$ where $f_{k,s,D}$ may exhibit discontinuities, these functions are eigenfunctions of $\Delta_{2k}$. In light of Theorem \ref{prop:fkDraise} and the fact that 
\begin{equation}\label{eqn:DeltaLR}
	\Delta_{2k}=-R_{2k-2} \circ L_{2k}=-L_{2k+2} \circ R_{2k}-2k,
\end{equation}
it is natural to investigate the action of $R_{2k}$ on $f_{k,s,D}$. Another local Maass form $\mathcal{F}_{k,s,D}$ (see \eqref{eqn:calFsDdef} for the explicit definition) generalizing $\mathcal{F}_{1-k,D}$, which was also investigated in \cite[Theorem 1.3]{BKV}, naturally arises by considering raising and lowering on $f_{k,s,D}$. 
\begin{thm}\label{thm:raiselowersparam}
Suppose that $k\in\Z\setminus\{0,1\}$.
\noindent

\noindent
\begin{enumerate}[leftmargin=*]
\item For $\re(s)\gg1$ and $z\notin E_D$, we have  
\begin{align*}
R_{2k}(f_{k,s,D}(z))&= \frac{2\Gamma\left(s+\frac{k}{2}+\frac{3}{4}\right)}{(4\pi)^{k}\Gamma\left(s-\frac{k}{2}-\frac{1}{4}\right)}\mathcal{F}_{k+1,s,D}(z),\\
R_{2k}(\mathcal{F}_{k,s,D}(z))&= \frac{2(4\pi)^k\left(s+\frac{k}{2}-\frac{1}{4}\right)\Gamma\left(s-\frac{k}{2}+\frac{1}{4}\right)}{\Gamma\left(s+\frac{k}{2}+\frac{1}{4}\right)} f_{k+1,s,D}(z),
\end{align*} where $\Gamma(\cdot)$ denotes the usual Gamma function. 
\item For $\re(s)\gg1$ and $z\notin E_D$, we have 
\begin{align*}
L_{2k}(f_{k,s,D}(z))&=\frac{2\Gamma\left(s+\frac{k}{2}-\frac{1}{4}\right)}{(4\pi)^{k-1}\Gamma\left(s-\frac{k}{2}-\frac{1}{4}\right)}\mathcal{F}_{k-1,s,D}(z),\\
 L_{2k}(\mathcal{F}_{k,s,D}(z))&= \frac{2(4\pi)^{k-1}\left(s-\frac{k}{2} -\frac{1}{4}\right)\Gamma\left(s-\frac{k}{2}+\frac{1}{4}\right)}{\Gamma\left(s+\frac{k}{2}-\frac{3}{4}\right)} f_{k-1,s,D}(z).
\end{align*}
\end{enumerate}
\end{thm}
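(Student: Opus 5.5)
The plan is a termwise computation, carried out separately for each summand of the series defining $f_{k,s,D}$ and $\mathcal{F}_{k,s,D}$ in \eqref{eqn:fksDdef} and \eqref{eqn:calFsDdef}. Both series have the shape
\[
\sum_{Q\in\mathcal{Q}_D} \epsilon_Q(z)\, Q(z,1)^{-k}\, \varphi\!\left(\frac{Dy^2}{|Q(z,1)|^2}\right).
\]
Here $\varphi$ is an explicit special function depending on $s$, of ${}_2F_1$/Legendre type, and $\epsilon_Q(z)$ is either $1$ or $\sgn(Q_z)$ (the latter for $\mathcal{F}$). For $\re(s)\gg1$ the series converge absolutely and locally uniformly, together with their first derivatives, away from $E_D$. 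So $R_{2k}$ and $L_{2k}$ may be applied term by term. Off $E_D$ the factor $\sgn(Q_z)$ is locally constant and is therefore invisible to the differential operators.

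\textbf{Step 1: a single term.} For one term, write $v:=Dy^2/|Q(z,1)|^2$. I would compute
\[
\frac{\partial}{\partial z}Q(z,1)^{-k}=-kQ'(z,1)Q(z,1)^{-k-1},\qquad \frac{\partial v}{\partial z}, \qquad \frac{\partial v}{\partial \overline z}.
\]
The derivatives of $v$ come from $\partial_z y=-\frac{i}{2}$ and $\partial_z |Q(z,1)|^2=Q'(z,1)\overline{Q(z,1)}$. The key algebraic inputs are the identities relating $Q'(z,1)$, $Q(z,1)$, $y$ and $Q_z$. One of these is $y\,Q'(z,1)-iQ(z,1)=iyQ_z$ up to normalization; this is essentially the splitting in Theorem \ref{thm:MRS} (2). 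Another is the relation between $Q_z^2$, $D$ and $|Q(z,1)|^2/y^2$, which yields $yQ_z/|Q(z,1)|=\pm\sqrt{1-v}$ or $\pm\sqrt{1+v}$, depending on the sign convention, with the sign given by $\sgn(Q_z)$. With these, $R_{2k}\big(Q(z,1)^{-k}\varphi(v)\big)$ should become
\[
Q(z,1)^{-k-1}\,\sgn(Q_z)\,\widetilde\varphi(v),
\]
where $\widetilde\varphi$ is a first-order differential expression in $\varphi$, with coefficients involving $\sqrt{1-v}$ and powers of $v$. This explains why raising interchanges $f$-type and $\mathcal{F}$-type series: a factor $\sgn(Q_z)$ is created, and applying raising twice gives $\sgn(Q_z)^2=1$. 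The same computation with $L_{2k}$, using $\partial_{\overline z}$, produces $\overline{Q(z,1)}$-factors. These are converted back into $Q(z,1)^{-k+1}$ via $|Q(z,1)|^2=Dy^2/v$, which lowers the weight index to $k-1$.

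\textbf{Step 2: matching special functions.} I then need to show that $\widetilde\varphi$ is a constant multiple of the special function appearing in $\mathcal{F}_{k+1,s,D}$ (respectively in $f_{k+1,s,D}$, $\mathcal{F}_{k-1,s,D}$, $f_{k-1,s,D}$). I would write everything in ${}_2F_1$ form and use the derivative formula $\frac{d}{dv}\big(v^{a}{}_2F_1(a,b;c;v)\big)=a v^{a-1}{}_2F_1(a+1,b;c;v)$ together with Gauss's contiguous relations. The constants, ratios of $\Gamma$-values such as $\Gamma(s+\frac k2+\frac34)/\Gamma(s-\frac k2-\frac14)$, then come from comparing the normalizing constants in the definitions of $f_{k,s,D}$ and $\mathcal{F}_{k,s,D}$, using $\Gamma(x+1)=x\Gamma(x)$.

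\textbf{Step 3: reducing the work.} Only one identity in each pair needs this full computation. For the other I would use \eqref{eqn:DeltaLR}, in the form $L_{2k+2}R_{2k}=-\Delta_{2k}-2k$, together with the known $\Delta$-eigenvalues of $f_{k,s,D}$ and $\mathcal{F}_{k,s,D}$ off $E_D$. Composing the first raising formula with lowering determines the product of the two constants, and one formula gives the other. Applying \eqref{eqn:DeltaLR} again similarly relates part (1) to part (2). As a sanity check, the product of the two constants in part (1) should equal the eigenvalue factor, which is a polynomial in $s$ of the form $(s+\frac k2-\frac14)(s+\frac k2+\frac14)\cdots$ up to normalization.

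\textbf{Main obstacle.} The main obstacle is Step 2: the exact bookkeeping of the special-function identity and of the normalizations taken from \cite{BKV}. Getting signs and the branch of $\sqrt{1-v}$ right is the delicate part, as is making sure the correct contiguous relation produces precisely the stated $\Gamma$-ratio. I would first carry this out for $R_{2k}(f_{k,s,D})$, checking it against Theorem \ref{prop:fkDraise} in the limiting case where $f_{k,s,D}$ specializes to $f_{k,D}$. I would then deduce the remaining identities via Step 3.
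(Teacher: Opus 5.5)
Your proposal is essentially the paper's proof: the paper computes both raising identities termwise using $R_{2k}(fg)=gR_{2k}(f)+fR_0(g)$, Lemmas \ref{lem:fkDraise} and \ref{lem:Qdiff}, the derivative formula and the relation $(a-b){}_2F_1(a,b;c;w)=a\,{}_2F_1(a+1,b;c;w)-b\,{}_2F_1(a,b+1;c;w)$, and then obtains both lowering identities from $\Delta_{2k}=-L_{2k+2}\circ R_{2k}-2k$, the eigenvalues in Lemma \ref{lem:flipweight} (3), and the shift $k\mapsto k-1$. Three small corrections: the leftover factor $\sqrt{1-v}$ is absorbed by Euler's transformation ${}_2F_1(a,b;c;w)=(1-w)^{c-a-b}{}_2F_1(c-a,c-b;c;w)$, not by a contiguous relation, since the parameters shift by $\frac12$; the Laplacian pairs $R_{2k}(f_{k,s,D})$ with $L_{2k+2}(\mathcal{F}_{k+1,s,D})$ and $R_{2k}(\mathcal{F}_{k,s,D})$ with $L_{2k+2}(f_{k+1,s,D})$, so both raising formulas must be computed directly; and your sanity check should multiply a part (1) constant by the index-shifted part (2) constant, giving $4(s+\frac{k}{2}-\frac14)(s-\frac{k}{2}-\frac34)=-(4\lambda_{k,s}+2k)$, rather than multiplying the two part (1) constants, whose product is not even a polynomial in $s$.
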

We next realize $\omega_{k+1,D}$ as a limiting case of $\mathcal F_{k+1,s,D}$.
\begin{thm}\label{prop:omegasparam}
We have
\[
\omega_{k+1,D}=\frac{24\pi\sqrt{D}(2k-1)!}{4^{k}(k-1)!(4\pi D)^{\frac{k}{2}+\frac{3}{4}}}\lim_{s\to \frac{k}{2}+\frac{1}{4}} \frac{\mathcal{F}_{k+1,s,D}}{\Gamma\left(s-\frac{k}{2}-\frac{1}{4}\right)}.
\]
\end{thm}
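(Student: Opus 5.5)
The plan is to combine Theorem \ref{prop:fkDraise} with the first identity of Theorem \ref{thm:raiselowersparam} (1), and to use the fact that $f_{k,s,D}$ specializes to a multiple of $f_{k,D}$ at $s=\frac{k}{2}+\frac14$. By Theorem \ref{thm:raiselowersparam} (1), for $z\notin E_D$ and $\re(s)\gg1$,
\[
\frac{\mathcal{F}_{k+1,s,D}(z)}{\Gamma\left(s-\frac{k}{2}-\frac14\right)}=\frac{(4\pi)^k}{2\Gamma\left(s+\frac{k}{2}+\frac34\right)}R_{2k}\left(f_{k,s,D}(z)\right).
\]
The right-hand side no longer involves the pole of $\Gamma$, so the limit $s\to\frac{k}{2}+\frac14$ reduces to understanding $f_{k,s,D}$ near that point.

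\textbf{Step 1.} I will first check that the identity above extends analytically in $s$ to a neighborhood of $s_0:=\frac{k}{2}+\frac14$. The point $s_0$ may lie outside the region $\re(s)\gg1$. In that case I would use the explicit definitions \eqref{eqn:fksDdef}, \eqref{eqn:calFsDdef} from \cite{BKV}: these are sums over $Q\in\mathcal Q_D$ of $y^{k}Q(z,1)^{-k}$-type factors times hypergeometric or incomplete-beta functions of $\frac{Dy^2}{|Q(z,1)|^2}$. For $k\ge2$ these sums converge absolutely and locally uniformly near $s_0$, so both sides are holomorphic there and the identity persists by uniqueness of analytic continuation. Differentiating termwise in $z$ is justified by the same locally uniform convergence.

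\textbf{Step 2.} Next I will evaluate $f_{k,s_0,D}$. From \cite[Theorem 1.1]{BKV}, $s_0$ is the value at which the Laplace eigenvalue of $f_{k,s,D}$ matches that of a holomorphic form of weight $2k$. At this value the special function in each summand should collapse to a power, giving $f_{k,s_0,D}=c_{k,D}\,f_{k,D}$ with an explicit constant $c_{k,D}$ involving $D^{\frac{k}{2}+\frac34}$-type factors and Gamma values. I would verify this termwise. The hypergeometric ${}_2F_1$ with parameters depending on $s$ degenerates when one parameter equals the other, giving $(1-v)^{-a}$. The expression $y^kQ(z,1)^{-k}(1-v)^{\cdots}$ then becomes $Q(z,1)^{-k}$ up to constants, since $|Q(z,1)|^2-Dy^2=y^2Q_z^2$. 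If instead the definition is normalized so that the holomorphic piece appears only through a residue, I would extract the corresponding constant the same way.

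\textbf{Step 3.} Taking the limit and applying Theorem \ref{prop:fkDraise} gives
\[
\lim_{s\to s_0}\frac{\mathcal{F}_{k+1,s,D}}{\Gamma\left(s-\frac{k}{2}-\frac14\right)}=\frac{(4\pi)^k c_{k,D}}{2\Gamma(k+1)}R_{2k}(f_{k,D})=\frac{(4\pi)^k c_{k,D}\,2k}{2\,k!}\,\omega_{k+1,D}.
\]
Since both sides are continuous across $E_D$ (for $f_{k,D}$ and $\omega_{k+1,D}$ trivially), the identity holds on all of $\H$. Finally I will simplify, using the duplication formula to turn $\Gamma$-values at half-integers into $\frac{(2k-1)!}{4^k(k-1)!}\sqrt{\pi}$-type expressions, and match the stated constant $\frac{24\pi\sqrt D(2k-1)!}{4^k(k-1)!(4\pi D)^{\frac k2+\frac34}}$.

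The main obstacle is Step 2. The specialization of $f_{k,s,D}$ at $s_0$ and its exact constant (including the factor $24$ and the $\sqrt{D}$ from the normalization in \cite{BKV}) depend on the precise definition \eqref{eqn:fksDdef}. This is where I expect to have to carefully track the special-function identity and the normalization.
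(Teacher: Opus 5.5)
Your route is correct, and it differs from the paper's. The paper never passes through $f_{k,s,D}$. It divides $\psi_{k+1,s}$ in \eqref{eqn:phi*sdef} by $\Gamma(s-\frac k2-\frac14)$, which exactly cancels the Gamma factor there. It then lets $s\to s_0:=\frac k2+\frac14$, where the hypergeometric factor becomes ${}_2F_1(k+\frac12,-\frac12;k+\frac12;w)=\sqrt{1-w}$. Finally it uses \eqref{eqn:Qz2-2} to turn $\sgn(Q_z)\sqrt{(1-w)/w}$ into $Q_z/\sqrt D$ termwise.

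The mechanism you sketch in Step 2 (a numerator parameter meeting the denominator one, then $|Q(z,1)|^2-Dy^2=y^2Q_z^2$) is this computation, but it happens on the $\mathcal F$-side. On the $\varphi$-side something simpler occurs, so your anticipated obstacle disappears. At $s_0$ both the exponent of $w$ and the second numerator parameter in \eqref{eqn:phisdef} vanish. Hence ${}_2F_1(k,0;k+\frac12;w)=1$ and $\varphi_{k,s_0}$ is the constant
\[
c_{k,D}=\frac{\Gamma(k)D^{\frac k2+\frac14}}{6\Gamma\left(k+\frac12\right)(4\pi)^{\frac k2-\frac14}},
\]
so $f_{k,s_0,D}=c_{k,D}f_{k,D}$. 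Your Step 3 constant is then $\frac{(4\pi)^k\cdot 2k}{2\cdot k!}c_{k,D}=\frac{(4\pi D)^{\frac k2+\frac14}}{6\Gamma(k+\frac12)}$. With $\Gamma(k+\frac12)=\frac{\sqrt\pi(2k)!}{4^kk!}$, this is exactly the reciprocal of the constant in the statement. Step 1 is also fine: the summands are $O(|Q(z,1)|^{\frac12-2\re(s)})$ and $s_0\ge\frac54$ for $k\ge2$.

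As for what each approach buys: yours exhibits Theorem \ref{prop:omegasparam} as the $s=s_0$ specialization of Theorem \ref{thm:raiselowersparam} (1) combined with Theorem \ref{prop:fkDraise}. This is conceptually natural, since it is the spectral version of the diagram in the introduction, and it doubles as a consistency check on the constants in Theorem \ref{thm:raiselowersparam}. The cost is that it relies on the much longer hypergeometric computation behind that theorem, on analytic continuation in $s$, and on interchanging $\lim_{s\to s_0}$ with $R_{2k}$. The paper's proof is a self-contained termwise evaluation that needs none of this.

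One step needs repair. Your argument gives the identity only for $z\notin E_D$, and continuity of the left-hand side across $E_D$ is not automatic. For $s$ near $s_0$, $\mathcal F_{k+1,s,D}$ genuinely jumps across $E_D$, because by Gauss's summation $\psi_{k+1,s}(1)=\frac{(4\pi D)^{\frac k2+\frac34}}{12\Gamma(s+\frac k2+\frac34)}\neq0$. The jump disappears only in the limit, since $\psi_{k+1,s}(1)/\Gamma(s-\frac k2-\frac14)\to0$. With this fact, at $z_0\in E_D$ the finitely many $Q$ with $Q_{z_0}=0$ drop out on both sides. The remaining terms, whose sum is smooth near $z_0$, are covered by the termwise form of the raising identity, which holds wherever $Q_z\neq0$.
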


The paper is organized as follows. In Section \ref{sec:prelim} we recall basic facts required for this paper. In Section \ref{sec:reproof} we prove Theorems \ref{thm:MRS}, \ref{prop:fkDraise}, and \ref{prop:omegaNegProp} and Section \ref{sec:sparam} is devoted to the proof of Theorems \ref{thm:raiselowersparam} and \ref{prop:omegasparam}.

	\section*{Acknowledgement}
	The first author has received funding from the European Research Council (ERC) under the European Union’s Horizon 2020 research and innovation programme (grant agreement No. 101001179). The second author was supported by grants from the Research Grants Council of the Hong Kong SAR, China (project numbers HKU 17314122 and 17305923). The authors thank Andreas Mono for helpful comments.

\section{Preliminaries}\label{sec:prelim}
\subsection{Holomorphic and non-holomorphic modular forms}\label{sec:ModularForms}

We briefly recall basic facts on modular forms and Maass forms needed below. For $\kappa\in\frac{1}{2}\Z$ and $\gamma=\left(\begin{smallmatrix}a&b\\c&d\end{smallmatrix}\right)\in \Gamma\subseteq\SL_2(\Z)$ (with $\Gamma\subseteq\Gamma_0(4)$ if $\kappa\notin\Z$), we define the weight $\kappa$ \begin{it}slash operator\end{it}
\[
f|_{\kappa}\gamma(z):=\begin{cases}
(cz+d)^{-\kappa}f(\gamma z)&\text{if }\kappa\in\Z,\\ 
\left(\frac{c}{d}\right) \varepsilon_d^{2\kappa } (cz+d)^{-\kappa} f(\gamma z)&\text{if }\kappa\in\Z+\frac{1}{2},
\end{cases}
\]
where $(\frac{\,\cdot\,}{\cdot})$ denotes the Jacobi symbol and, for $d$ odd, we let 
\[
\varepsilon_d:=\begin{cases}1&\text{if }d\equiv 1\pmod{4},\\ i&\text{if }d\equiv 3\pmod{4}.\end{cases}
\]
A function $f$ is \emph{modular of weight $\kappa$ } on $\Gamma$ if $f|_{\kappa}\gamma=f$ for every $\gamma\in\Gamma$. If $f$ is holomorphic on $\H$, satisfies weight $\kappa$ modularity on $\Gamma$, and $f|_{\kappa}\gamma(z)=O(1)$ as $z\to i\infty$ for all $\gamma\in\SL_2(\Z)$, then we call $f$ a \begin{it}(holomorphic) modular form of weight $\kappa$ on $\Gamma$\end{it}. The space of such forms is denoted by $M_{\kappa}(\Gamma)$. If $f|_{\kappa}\gamma (z)=o(1)$ as  $z\to i\infty$ for all $\gamma\in\SL_2(\Z)$, then we call $f$ a \begin{it}cusp form\end{it}, and we let $S_{\kappa}(\Gamma)$ denote the space of cusp forms of weight $\kappa$. 

For $m\in\Z\setminus\{0\}$ and $\kappa>2$, the \emph{$m$-th Poincar\'e series of exponential type} for $\Gamma_0(4)$ is defined by (with $q:=e^{2\pi i z}$ throughout)
\[
P_{\kappa,m}(z):=\sum_{\gamma\in\Gamma_\infty\setminus\Gamma_0(4)} q^m\big|_{\kappa} \gamma,
\]
where $\Gamma_\infty := \{\pm\begin{psmallmatrix}1&n\\0&1\end{psmallmatrix}:n\in\Z\}$. For $\kappa=k+\frac{1}{2}\in\Z+\frac{1}{2}$ and $\Gamma\subseteq\Gamma_0(4)$, \begin{it}Kohnen's plus space\end{it} is the subspace of $M_{\kappa}(\Gamma)$ consisting of functions $f$ with Fourier expansions of the shape $f(z)=\sum_{(-1)^kD\equiv 0,1\pmod{4}} c(n)q^n$.
We let $|\operatorname{pr}$ denote the projection operator to the plus space (see \cite[Section 3]{KohnenNewforms}) and define the \emph{projected Poincar\'e series} 
\begin{equation}\label{eqn:Poincaredef}
P_{k+\frac{1}{2},m}^+:=P_{k+\frac{1}{2},m}|\operatorname{pr}.
\end{equation}

We next define local weak Maass forms. 

\begin{defn*}
	A function $f\colon\H\to\C$ which is real-analytic outside of an exceptional set $\mathcal{E}\subset\H$ of measure zero is a {\it local weak Maass form of weight $k\in\Z$, exceptional set $\mathcal{E}$, and eigenvalue $\lambda$ on} $\SL_2(\Z)$ if the following conditions hold: 
	\begin{enumerate}[leftmargin=*]
		\item We have, for $\begin{psmallmatrix}a&b\\c&d\end{psmallmatrix} \in \SL_2(\Z)$,
		\[
		f\left(\frac{az+b}{cz+d}\right) = (cz+d)^k f(z).
		\]
		
		\item For $\tau\notin\mathcal{E}$, there exists a neighborhood of $\tau$, in which we have
		\[
		\Delta_k(f)(z) = \lambda f(z).
		\]
		Here the {\it weight $k$ Laplace operator} is defined as
		\[
		\Delta_k := -y^2\left(\frac{\partial^2}{\partial x^2} + \frac{\partial^2}{\partial y^2}\right) + iky\left(\frac{\partial}{\partial x} + i\frac{\partial}{\partial y}\right).
		\]
		Note that
		\[
		\Delta_k = -\xi_{2-k}\circ\xi_k.
		\]
		
		\item The function $f$ has at most linear exponential growth at $i\infty$.
	\end{enumerate}
	If $\mathcal{E}=\emptyset$, then we simply call $f$ a \begin{it}weak Maass form\end{it} and if moreover $\lambda=0$, then $f$ is called a {\it harmonic Maass form}.
\end{defn*}

We also require the \emph{Petersson inner product} defined for $f,g$ having weight $k\in\N+\frac{1}{2}$ on $\Gamma_0(4)$ (in case of absolute convergence)
\begin{equation}\label{Pet}
	\langle f,g\rangle = \frac{1}{\left[\SL_2(\Z):\Gamma_0(4)\right]} \int_{\mathcal F_4} f(z) \overline{g(z)} y^k \frac{dxdy}{y^2},
\end{equation}
where $\mathcal F_4$ is a fundamental domain for $\Gamma_0(4)$.

\subsection{The raising and the lowering operator} 
We first recall a simple relation between modularity of weight $\kappa$ and $-\kappa$.

\begin{lem}\label{lem:conjmodular}
If $f$ satisfies weight $\kappa\in\Z$ modularity, then $y^{\kappa}\overline{f(z)}$ satisfies weight $-\kappa$ modularity. 
\end{lem}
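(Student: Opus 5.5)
The plan is a direct computation with the weight $\kappa$ transformation law and the transformation rule for the imaginary part. Fix $\gamma=\left(\begin{smallmatrix}a&b\\c&d\end{smallmatrix}\right)\in\SL_2(\Z)$ and set $g(z):=y^{\kappa}\overline{f(z)}$. We will use two standard facts: $\operatorname{Im}(\gamma z)=\frac{y}{|cz+d|^2}$, and, by hypothesis, $f(\gamma z)=(cz+d)^{\kappa}f(z)$. Because $\kappa\in\Z$ and $a,b,c,d$ are real, complex conjugation gives $\overline{f(\gamma z)}=(c\overline z+d)^{\kappa}\overline{f(z)}$, with no branch issues.

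Substituting both facts, we obtain
\[
g(\gamma z)=\frac{y^{\kappa}}{|cz+d|^{2\kappa}}\,(c\overline z+d)^{\kappa}\,\overline{f(z)}
=\frac{y^{\kappa}}{(cz+d)^{\kappa}(c\overline z+d)^{\kappa}}\,(c\overline z+d)^{\kappa}\,\overline{f(z)}
=(cz+d)^{-\kappa}g(z).
\]
This is exactly weight $-\kappa$ modularity in the sense of the definition above, that is, $g|_{-\kappa}\gamma=g$.

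The only step needing care is the splitting $|cz+d|^{2\kappa}=(cz+d)^{\kappa}(c\overline z+d)^{\kappa}$. For integral $\kappa$, including negative values, this is immediate, since both sides are integer powers of $|cz+d|^2=(cz+d)(c\overline z+d)$. There is therefore no real obstacle: the argument works verbatim for any subgroup $\Gamma$, and nothing beyond the definitions is needed.
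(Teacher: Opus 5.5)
Your computation is correct: combining $\operatorname{Im}(\gamma z)=y/|cz+d|^2$ with $|cz+d|^{2\kappa}=(cz+d)^{\kappa}(c\overline{z}+d)^{\kappa}$ for $\kappa\in\Z$ gives exactly $g(\gamma z)=(cz+d)^{-\kappa}g(z)$. The paper states this lemma without proof as a standard fact, and your direct verification is the routine check it implicitly relies on.
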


Recall that the lowering operator $L_{\kappa}$ (resp. the raising operator $R_{\kappa}$) lowers (resp. raises) the weight and changes the eigenvalue of a weak Maass form (see \cite[Lemma 5.2 ii)]{HarmonicBook}).
\begin{lem}\label{lem:lower}
Suppose that $f$ is a weak Maass form of weight $\kappa\in\Z$ with eigenvalue $\lambda$. Then $L_{\kappa}(f)$ is a weak Maass form of weight $\kappa-2$ and eigenvalue $\lambda-\kappa+2$ and $R_{\kappa}(f)$ is a weak Maass form of weight $\kappa+2$ and eigenvalue $\lambda+\kappa$.
\end{lem}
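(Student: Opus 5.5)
The statement to prove is Lemma \ref{lem:lower}: the weight and eigenvalue behaviour of $L_\kappa$ and $R_\kappa$ on weak Maass forms.

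\textbf{Weight.} The plan is to verify the intertwining relations with the slash action directly. For $\gamma=\left(\begin{smallmatrix}a&b\\c&d\end{smallmatrix}\right)\in\SL_2(\Z)$, write $w=\gamma z$. We use
\[
\frac{dw}{dz}=(cz+d)^{-2},\qquad \operatorname{Im}(w)=\frac{y}{|cz+d|^2}.
\]
Differentiating $f(\gamma z)=(cz+d)^\kappa f(z)$ with respect to $z$ and applying the chain rule gives
\[
(cz+d)^{-2}(\partial_w f)(\gamma z)=\kappa c(cz+d)^{\kappa-1}f(z)+(cz+d)^\kappa\partial_z f(z).
\]
Rewriting $\frac{\kappa}{\operatorname{Im}(\gamma z)}$ as $\frac{\kappa|cz+d|^2}{y}$ and expanding $\frac{1}{y}=\frac{2i}{z-\overline z}$, the $c$-terms combine via the identity
\[
2ic(cz+d)+\frac{(cz+d)^2-|cz+d|^2}{y}\cdot(\cdots)=0.
\]
This yields $(R_\kappa f)(\gamma z)=(cz+d)^{\kappa+2}R_\kappa f(z)$.

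For $L_\kappa$ the argument is simpler. Since $\partial_{\overline z}$ sees only the antiholomorphic factor, we have $\partial_{\overline z}(f\circ\gamma)=(c\overline z+d)^{-2}(\partial_{\overline w}f)\circ\gamma$. Together with $\operatorname{Im}(\gamma z)^2=y^2|cz+d|^{-4}$, this gives $(L_\kappa f)(\gamma z)=(cz+d)^{\kappa-2}L_\kappa f(z)$.

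\textbf{Eigenvalues.} The key tool is the pair of operator identities
\[
\Delta_\kappa=-R_{\kappa-2}L_\kappa,\qquad \Delta_\kappa=-L_{\kappa+2}R_\kappa-\kappa,
\]
which are the general-weight form of \eqref{eqn:DeltaLR}. I would verify these by expanding in $\partial_z,\partial_{\overline z}$. Writing $\Delta_\kappa=-4y^2\partial_z\partial_{\overline z}+2i\kappa y\partial_{\overline z}$, a direct computation gives
\[
-R_{\kappa-2}L_\kappa=-(2i\partial_z+\tfrac{\kappa-2}{y})(-2iy^2\partial_{\overline z}).
\]
Using $\partial_z y=-\tfrac{i}{2}$, this equals $\Delta_\kappa$. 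The second identity is checked similarly, and the extra constant $-\kappa$ arises from the commutator of $\partial_{\overline z}$ with $1/y$.

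From these identities the eigenvalue claims follow formally. For $L$:
\[
\Delta_{\kappa-2}L_\kappa f=-L_\kappa R_{\kappa-2}L_\kappa f-(\kappa-2)L_\kappa f=L_\kappa\Delta_\kappa f-(\kappa-2)L_\kappa f=(\lambda-\kappa+2)L_\kappa f.
\]
For $R$:
\[
\Delta_{\kappa+2}R_\kappa f=-R_\kappa L_{\kappa+2}R_\kappa f=R_\kappa(\Delta_\kappa+\kappa)f=(\lambda+\kappa)R_\kappa f.
\]

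\textbf{Remaining conditions.} Real-analyticity is clearly preserved by differential operators with real-analytic coefficients. For growth at $i\infty$, one can use the fact that $f$ has a Fourier–Whittaker expansion (as an eigenfunction), and $L$ and $R$ act termwise while preserving linear exponential growth. Alternatively, this follows from Cauchy-type estimates for derivatives of eigenfunctions of an elliptic operator.

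I expect the main obstacle to be the careful bookkeeping in the $R_\kappa$ intertwining and in the constant of the second identity. The growth condition is the only non-algebraic point, and I would handle it via the Whittaker expansion.
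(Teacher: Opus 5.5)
Your proof is correct. The paper gives no argument of its own here: it quotes the lemma from Lemma 5.2 ii) of the harmonic Maass forms book, and the only related ingredient it writes down is the factorization \eqref{eqn:DeltaLR} in weight $2k$. What you supply is the standard self-contained proof behind that citation. It consists of the intertwining relations $(R_\kappa f)(\gamma z)=(cz+d)^{\kappa+2}R_\kappa f(z)$ and $(L_\kappa f)(\gamma z)=(cz+d)^{\kappa-2}L_\kappa f(z)$, together with $\Delta_\kappa=-R_{\kappa-2}L_\kappa=-L_{\kappa+2}R_\kappa-\kappa$ for arbitrary $\kappa$. Both eigenvalue shifts then follow formally. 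Your expansions of the two operator identities check out, including the constant $-\kappa$ coming from $\partial_{\overline z}(1/y)=-\frac{i}{2y^2}$, and so do the two eigenvalue computations. The citation buys brevity; your route buys independence from the reference, at the cost of having to treat the growth condition by hand.

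Two spots need tidying.

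First, in the $R_\kappa$ intertwining step, the displayed identity is left with an unspecified factor $(\cdots)$. What you need is $2ic(cz+d)+\frac{|cz+d|^2}{y}=\frac{(cz+d)^2}{y}$, which follows from $(cz+d)^2-|cz+d|^2=c(z-\overline z)(cz+d)=2icy(cz+d)$; so the factor is $-1$.

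Second, for the growth condition the elliptic-estimate route is the cleanest, provided you make it scale-invariant. The operator $\Delta_\kappa$ is unchanged under $z\mapsto x_0+y_0z$. Interior estimates for $F(w):=f(x_0+y_0w)$ on the fixed disc $|w-i|<\frac12$ therefore give $y_0\left(|\partial_z f(z_0)|+|\partial_{\overline z} f(z_0)|\right)\ll_{\kappa,\lambda}\sup_{|z-z_0|<y_0/2}|f(z)|$ uniformly in $z_0=x_0+iy_0$. Hence $R_\kappa f$ and $L_\kappa f$ still have at most linear exponential growth, the latter losing only a factor $y_0$. The Whittaker-expansion route also works, but as sketched it omits the coefficient bounds needed to control the tail of the decaying terms.
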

The lowering and $\xi$-operators are related to each other.

\begin{lem}\label{lem:xilower}
If $F$ is real-analytic in some neighborhood of $z$, then we have
\[
y^{2-\kappa}\overline{\xi_\kappa(F(z))}=L_{\kappa}(F(z)).
\]
\end{lem}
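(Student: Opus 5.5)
The identity in Lemma~\ref{lem:xilower} is a direct pointwise computation from the definitions $\xi_\kappa = 2iy^{\kappa}\overline{\frac{\partial}{\partial \overline z}}$ and $L_\kappa = -2iy^2\frac{\partial}{\partial\overline z}$. No modularity enters, only local real-analyticity, which lets us apply $\partial/\partial\overline z$ and commute it with complex conjugation. The plan is to compute $\xi_\kappa(F)$ explicitly, conjugate it, multiply by $y^{2-\kappa}$, and compare with $L_\kappa(F)$.

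First I read $\xi_\kappa(F) = 2iy^{\kappa}\,\overline{\frac{\partial F}{\partial \overline z}}$, since the overline applies to the derivative of $F$. Taking complex conjugates, and using that $y$ is real, gives
\[
\overline{\xi_\kappa(F(z))} = \overline{2i}\, y^{\kappa}\, \frac{\partial F}{\partial \overline z}(z) = -2i y^{\kappa}\frac{\partial F}{\partial\overline z}(z).
\]
Multiplying by $y^{2-\kappa}$ then yields $-2iy^2 \frac{\partial F}{\partial \overline z}(z) = L_\kappa(F(z))$, which is the claim.

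There is essentially no obstacle. The only points to note are that $\kappa\in\Z$ is not even needed, since $y^\kappa$ is real and positive for any real $\kappa$, and that the hypothesis that $F$ is real-analytic near $z$ is exactly what makes $\frac{\partial F}{\partial \overline z}$ well defined at $z$.
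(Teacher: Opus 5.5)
Your computation is correct. Conjugating $\xi_\kappa(F)=2iy^{\kappa}\overline{\partial F/\partial\overline z}$ and multiplying by $y^{2-\kappa}$ gives $-2iy^{2}\,\partial F/\partial\overline z=L_\kappa(F)$. This is the direct verification from the definitions that the paper leaves implicit, since it states the lemma without proof.

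One small quibble: real-analyticity is more than is needed, because real differentiability at $z$ already suffices. So the hypothesis is not ``exactly'' what makes $\partial F/\partial\overline z$ well defined.
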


The raising and the $\xi$-operator are also related by the following lemma.
\begin{lem}\label{lem:raisexi}
For a function $f$ which is continuously real-differentiable in a neighborhood of $z\in\H$, we have
\[
 R_{-\kappa}\left(y^{\kappa}\overline{f(z)}\right)=\xi_{\kappa}(f(z)).
\]
\end{lem}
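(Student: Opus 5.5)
The plan is to verify the identity by direct computation with Wirtinger derivatives. Only the product rule is needed, together with two elementary facts. The only hypothesis used is that $f$ is continuously real-differentiable near $z$, so that $\frac{\partial}{\partial z}$ and $\frac{\partial}{\partial \overline z}$ of $f$ and $\overline f$ exist there.

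The two facts are the following. First, since $y=\frac{z-\overline z}{2i}$, we have $\frac{\partial y}{\partial z}=\frac{1}{2i}$. Second, since the Wirtinger derivatives satisfy $\frac{\partial}{\partial z}=\frac12\left(\frac{\partial}{\partial x}-i\frac{\partial}{\partial y}\right)$ and $\frac{\partial}{\partial\overline z}=\frac12\left(\frac{\partial}{\partial x}+i\frac{\partial}{\partial y}\right)$, we have
\[
\frac{\partial}{\partial z}\overline{f}=\overline{\frac{\partial f}{\partial \overline z}}.
\]

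With these in hand, I would expand $R_{-\kappa}=2i\frac{\partial}{\partial z}-\frac{\kappa}{y}$ applied to $y^\kappa\overline{f}$ using the product rule:
\[
2i\frac{\partial}{\partial z}\left(y^{\kappa}\overline{f(z)}\right)=2i\kappa y^{\kappa-1}\cdot\frac{1}{2i}\,\overline{f(z)}+2iy^{\kappa}\,\overline{\frac{\partial f}{\partial\overline z}(z)}=\kappa y^{\kappa-1}\overline{f(z)}+2iy^{\kappa}\,\overline{\frac{\partial f}{\partial\overline z}(z)}.
\]
Subtracting $\frac{\kappa}{y}\,y^\kappa\overline{f(z)}=\kappa y^{\kappa-1}\overline{f(z)}$ cancels the first term exactly. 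What remains is $2iy^\kappa\,\overline{\frac{\partial f}{\partial\overline z}}$, which is precisely $\xi_\kappa(f)$ by the definition given in the introduction.

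There is no real obstacle here. The only point needing care is the sign and constant bookkeeping in $\frac{\partial y}{\partial z}=\frac{1}{2i}$, since this is what makes the $\frac{\kappa}{y}$ term cancel. I would double-check it by writing $\frac{\partial}{\partial z}=\frac12\left(\frac{\partial}{\partial x}-i\frac{\partial}{\partial y}\right)$, which gives $\frac{\partial y}{\partial z}=-\frac{i}{2}=\frac{1}{2i}$.
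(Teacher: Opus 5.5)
Your computation is correct, and it is the same routine verification the paper relies on: the paper states this lemma without proof. The key cancellation comes from $\frac{\partial y}{\partial z}=\frac{1}{2i}$ killing the $-\frac{\kappa}{y}$ term, and the identity $\frac{\partial}{\partial z}\overline{f}=\overline{\frac{\partial f}{\partial \overline z}}$ then gives exactly $\xi_\kappa(f)$ as defined in the introduction.
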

Combining Lemmas \ref{lem:xilower} and \ref{lem:raisexi} yields the  following relation.
\begin{lem}\label{lem:Deltaconj}
If $f$ satisfies $\Delta_{\kappa}(f)=\lambda f$ in a neighborhood of $z\in\H$, then 
\[
\Delta_{-\kappa}\left(y^{\kappa}\overline{f(z)}\right)=\left(\overline{\lambda}+\kappa\right) y^{\kappa} \overline{f(z)}.
\]
 \end{lem}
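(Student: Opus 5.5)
The last statement is Lemma \ref{lem:Deltaconj}, and I will derive it from Lemmas \ref{lem:xilower} and \ref{lem:raisexi} together with the factorization $\Delta_\kappa=-\xi_{2-\kappa}\circ\xi_\kappa$ and the identity \eqref{eqn:DeltaLR}, in its general weight form
\[
\Delta_\kappa=-R_{\kappa-2}\circ L_\kappa=-L_{\kappa+2}\circ R_\kappa-\kappa .
\]
Write $g(z):=y^{\kappa}\overline{f(z)}$. The plan is to express $\Delta_{-\kappa}(g)$ as lowering after raising, and then move the conjugation through each operator.

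\textbf{Step 1.} By the general identity with $\kappa$ replaced by $-\kappa$,
\[
\Delta_{-\kappa}(g)=-L_{2-\kappa}\bigl(R_{-\kappa}(g)\bigr)+\kappa g .
\]
Lemma \ref{lem:raisexi} gives $R_{-\kappa}(g)=\xi_\kappa(f)$.

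\textbf{Step 2.} Apply Lemma \ref{lem:xilower} with weight $2-\kappa$ and $F=\xi_\kappa(f)$. This is legitimate because $f$ is real-analytic near $z$: it is an eigenfunction of an elliptic operator. We get
\[
L_{2-\kappa}\bigl(\xi_\kappa(f)\bigr)=y^{\kappa}\,\overline{\xi_{2-\kappa}\bigl(\xi_\kappa(f)\bigr)}=y^\kappa\,\overline{-\Delta_\kappa(f)}=-\overline{\lambda}\,y^\kappa\overline{f}.
\]

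\textbf{Step 3.} Combining the two steps,
\[
\Delta_{-\kappa}(g)=\overline{\lambda}g+\kappa g=\bigl(\overline{\lambda}+\kappa\bigr)g,
\]
which is the claim.

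The only real obstacle is bookkeeping. First, I must confirm that \eqref{eqn:DeltaLR}, stated for weight $2k$, holds for arbitrary integer weight $\kappa$ in the form $\Delta_\kappa=-L_{\kappa+2}R_\kappa-\kappa$. I would verify this by direct expansion: with $R_\kappa=2i\partial_z+\kappa/y$ and $L_{\kappa+2}=-2iy^2\partial_{\overline z}$, we have
\[
-L_{\kappa+2}R_\kappa=-4y^2\partial_{\overline z}\partial_z+2i\kappa\, y^2\partial_{\overline z}(1/y)+2i\kappa\, y\,\partial_{\overline z},
\]
and $\partial_{\overline z}(1/y)=\frac{1}{2iy^2}$, so the middle term equals $\kappa$. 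The remaining operator $-4y^2\partial_z\partial_{\overline z}+2i\kappa y\partial_{\overline z}$ matches $\Delta_\kappa$, since $i\kappa y(\partial_x+i\partial_y)=2i\kappa y\,\partial_{\overline z}$. Hence $-L_{\kappa+2}R_\kappa=\Delta_\kappa+\kappa$, as needed. Second, I must check the weight index in Lemma \ref{lem:xilower}: the exponent $y^{2-(2-\kappa)}=y^\kappa$ is what is required.
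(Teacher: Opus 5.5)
Your argument is correct and takes the same approach as the paper, which justifies this lemma only by the remark that it follows from combining Lemmas \ref{lem:xilower} and \ref{lem:raisexi}. Your chain $\Delta_{-\kappa}=-L_{2-\kappa}\circ R_{-\kappa}+\kappa$, $R_{-\kappa}(y^{\kappa}\overline{f})=\xi_{\kappa}(f)$, $L_{2-\kappa}(\xi_{\kappa}(f))=y^{\kappa}\overline{\xi_{2-\kappa}(\xi_{\kappa}(f))}=-\overline{\lambda}\,y^{\kappa}\overline{f}$ spells that remark out, and your direct check of the general-weight form of \eqref{eqn:DeltaLR} supplies the identity the paper states only for weight $2k$; the appeal to elliptic regularity is harmless but unnecessary, since Lemma \ref{lem:xilower} is a pointwise identity for any differentiable $F$.
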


We next compute the action of the raising operator on functions attached to binary quadratic forms.
\begin{lem}\label{lem:Qdiff}\hspace{0cm}
\noindent

\noindent	
\begin{enumerate}[leftmargin=*]
\item We have 
\[
R_{0}\left(\frac{Dy^2}{|Q(z,1)|^2}\right)=\frac{2Dy^2Q_z}{Q(\overline{z},1)Q(z,1)^2}.
\]
\item We have 
\[
R_{0}\left(\frac{Q_z^2}{D}\right)=R_{0,z}\left(\frac{|Q(z,1)|^2}{Dy^2}\right)=-\frac{2Q(\overline{z},1) Q_z}{Dy^2}.
\]
\end{enumerate}
\end{lem}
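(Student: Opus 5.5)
The plan is to verify both identities by direct differentiation, using $R_0=2i\frac{\partial}{\partial z}$ (there is no multiplicative term in weight $0$). I will prove (2) first and then deduce (1) from it. Throughout, $Q(z,1)=az^2+bz+c$ is holomorphic in $z$ and $Q(\overline z,1)$ is antiholomorphic, so $\frac{\partial}{\partial z}Q(\overline z,1)=0$. Since $y=\frac{z-\overline z}{2i}$, we have $\frac{\partial y}{\partial z}=\frac{1}{2i}$.

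\textbf{Two algebraic identities.} The first is
\[
Q(z,1)-iyQ'(z,1)=yQ_z .
\]
To check it, expand $Q(z,1)-iy(2az+b)=a\,z(z-2iy)+b(z-iy)+c$. Then use $z-2iy=\overline z$ and $z-iy=x$ to obtain $a|z|^2+bx+c=yQ_z$. The second is
\[
\frac{|Q(z,1)|^2}{y^2}=Q_z^2+D .
\]
This is a standard polynomial identity; it can be checked by expanding both sides in $x$ and $y$ and using $D=b^2-4ac$. A quick sanity check is $Q=[0,1,0]$: both sides equal $\frac{x^2+y^2}{y^2}$. The second identity gives the first equality in (2) immediately: $\frac{|Q(z,1)|^2}{Dy^2}$ and $\frac{Q_z^2}{D}$ differ by the constant $1$, which $R_0$ annihilates.

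\textbf{Computation for (2).} By the product rule,
\[
2i\frac{\partial}{\partial z}\left(\frac{Q(z,1)Q(\overline z,1)}{y^2}\right)=\frac{2iQ'(z,1)Q(\overline z,1)}{y^2}-\frac{2Q(z,1)Q(\overline z,1)}{y^3}=\frac{2Q(\overline z,1)}{y^3}\left(iyQ'(z,1)-Q(z,1)\right).
\]
By the first identity, the bracket equals $-yQ_z$. Dividing by $D$ then yields $-\frac{2Q(\overline z,1)Q_z}{Dy^2}$, which is (2).

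\textbf{Deduction of (1).} Let $g:=\frac{|Q(z,1)|^2}{Dy^2}$, so the function in (1) is $\frac1g$. Since $R_0$ is a first-order derivation, $R_0\!\left(\frac1g\right)=-\frac{R_0(g)}{g^2}$. Inserting (2) gives
\[
\frac{2Q(\overline z,1)Q_z}{Dy^2}\cdot\frac{D^2y^4}{Q(z,1)^2Q(\overline z,1)^2}=\frac{2Dy^2Q_z}{Q(\overline z,1)Q(z,1)^2},
\]
as claimed. The argument is valid wherever $Q(z,1)\neq0$, which holds on all of $\H$ for $D>0$, since the roots of $Q$ are then real.

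The only step needing care is the identity $Q(z,1)-iyQ'(z,1)=yQ_z$. This is the mechanism by which the derivative of a quadratic-form expression is converted back into the real-analytic quantity $Q_z$, and everything else is routine bookkeeping.
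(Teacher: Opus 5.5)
Your proof is correct and follows essentially the same route as the paper: both rest on the identity $\frac{Q(z,1)}{y}-iQ'(z,1)=Q_z$, the relation $\frac{Q_z^2}{D}=\frac{|Q(z,1)|^2}{Dy^2}-1$, and the quotient rule for $R_0=2i\frac{\partial}{\partial z}$. The only difference is the order: the paper computes (1) directly and deduces (2) from it, while you compute (2) directly and deduce (1), which is immaterial.
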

\begin{proof}
(1) The claim follows by a direct calculation using 
\begin{equation}\label{eqn:Qzeval}
\frac{Q(z,1)}{y} - iQ'(z,1) = Q_z.
\end{equation}

\noindent
(2) The second identity in (2) follows from (1). Using \cite[(3.3)]{BKV}, we have
\begin{equation}\label{eqn:Qz2}
\frac{Q_z^2}{D}=\frac{|Q(z,1)|^2}{Dy^2}-1.
\end{equation}
This yields the first identity.
\end{proof}

We also require the following lemma.

\begin{lem}\label{lem:fkDraise}\hspace{0cm}
	For $D\in\Z$ and $Q\in\mathcal{Q}_D$, we have 
	\[
	R_{2k}\left(\frac{1}{Q(z,1)^{k}}\right)=\frac{2k Q_z}{Q(z,1)^{k+1}}.
	\]
\end{lem}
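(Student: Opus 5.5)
Since $Q(z,1)^{-k}$ is holomorphic in $z$, the raising operator acts on it simply as $R_{2k}=2i\frac{\partial}{\partial z}+\frac{2k}{y}$. The plan is to differentiate directly and then recognize the result using the identity \eqref{eqn:Qzeval}, which was already used in the proof of Lemma \ref{lem:Qdiff}.

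First I compute the derivative by the chain rule:
\[
2i\frac{\partial}{\partial z}\left(Q(z,1)^{-k}\right)=-2ik\,Q'(z,1)\,Q(z,1)^{-k-1}.
\]
Next I add the multiplication term. Writing $\frac{2k}{y}Q(z,1)^{-k}=\frac{2k}{y}Q(z,1)\,Q(z,1)^{-k-1}$ and combining, I get
\[
R_{2k}\left(\frac{1}{Q(z,1)^{k}}\right)=\frac{2k}{Q(z,1)^{k+1}}\left(\frac{Q(z,1)}{y}-iQ'(z,1)\right).
\]
By \eqref{eqn:Qzeval}, the bracket equals $Q_z$, which gives the claim.

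If I do not take \eqref{eqn:Qzeval} for granted, I check it by writing $Q=[a,b,c]$. Then $Q(z,1)=az^2+bz+c$ and $Q'(z,1)=2az+b$. Using $iy=z-x$, we have
\[
Q(z,1)-iyQ'(z,1)=az^2+bz+c-(z-x)(2az+b).
\]
Expanding, this equals $-az^2+2axz+bx+c$. Since $-z^2+2xz=x^2+y^2=|z|^2$, it simplifies to $a|z|^2+bx+c=yQ_z$.

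There is no real obstacle here: the only points needing care are the sign coming from $i\cdot(-i)$ and the fact that $Q(z,1)\neq 0$ for $z\in\H$. The latter holds because $D>0$ is not a square when $a\neq0$, and when $a=0$ the only root $-c/b$ is real. So $Q(z,1)\neq0$ on $\H$ and the expression is smooth there.
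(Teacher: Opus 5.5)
Your proposal is correct and is exactly the paper's argument: differentiate to get $\frac{2k}{Q(z,1)^{k+1}}\left(\frac{Q(z,1)}{y}-iQ'(z,1)\right)$ and identify the bracket as $Q_z$ via \eqref{eqn:Qzeval}, which you also verify correctly by expanding with $iy=z-x$. One small slip in your side remark: for $D>0$ the roots $\frac{-b\pm\sqrt{D}}{2a}$ are real whether or not $D$ is a square, so squareness plays no role (and for $D<0$ the identity simply holds away from the root of $Q(z,1)$ in $\H$).
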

\begin{proof}
	A direct computation gives
	\begin{equation}
		R_{2k}\left(\frac{1}{Q(z,1)^{k}}\right)=\frac{2k}{Q(z,1)^{k+1}}\left(\frac{Q(z,1)}{y} - iQ'(z,1)\right).\label{Raising the sum}
	\end{equation}
	Using \eqref{eqn:Qzeval} then gives the claim.
\end{proof}

\subsection{Lifts} We finally recall an interpretation of the quadratic form Poincar\'e series as lifts. The functions considered in this paper naturally appear as lifts of half-integral weight objects via the Shimura correspondence (see \cite{BKV,KohnenZagier}). In particular, we recall the construction of $f_{k,D}$ due to Kohnen and Zagier. For fixed $z\in\H$, they \cite[Theorem 2]{KohnenZagier} showed that 
\begin{equation}\label{eqn:Omegadef}
\Omega_k(\tau,z):=\sum_{D\ge 1} D^{k-\frac{1}{2}} f_{k,D}(z) e^{2\pi i D\tau}
\end{equation}
is a weight $k+\frac{1}{2}$ cusp form on $\Gamma_0(4)$ as a function of $\tau$. Moreover, we recover $f_{k,D}$ by integrating the Poincar\'e series $P_{k+\frac{1}{2}, D}(z)$ against $\Omega$:
\begin{equation} \label{eqn:fkDthetalift}
\left\langle \Omega_k(\cdot,z),P_{k+\frac{1}{2},D}^+\right\rangle=\frac{\Gamma\left(k-\frac{1}{2}\right)}{6(4\pi)^{k-\frac{1}{2}}} f_{k,D}(z). 
\end{equation}
	
\section{Proof of Theorems \ref{prop:fkDraise} and \ref{prop:omegaNegProp} and a reproof of Theorem \ref{thm:MRS}}\label{sec:reproof}

Theorem \ref{prop:fkDraise} follows immediately from Lemma~\ref{lem:fkDraise}.

\begin{proof}[Proof of Theorem \ref{prop:fkDraise}]
The series defining $f_{k,D}$ converges absolutely locally uniformly, so applying Lemma \ref{lem:fkDraise} termwise to \eqref{fk} yields the claim.
\end{proof}

To prove Theorem \ref{thm:MRS} we require the following symmetry property which follows from the fact that $[a,-b,c]$ runs through  $\mathcal Q_D$ if $[a,b,c]$ does.
\begin{lem}\label{lem:omegaflip}
We have 
\[
\overline{\omega_{k+1,D}(-\overline{z})}=\omega_{k+1,D}(z).
\]
\end{lem}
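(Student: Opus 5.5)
We show the identity termwise. The substitution $Q=[a,b,c]\mapsto Q^-:=[a,-b,c]$ is a bijection of $\mathcal Q_D$ to itself, because the discriminant $b^2-4ac$ does not change. Hence it is enough to prove that for each $Q$,
\[
\overline{\frac{Q_{-\overline z}}{Q(-\overline z,1)^{k+1}}}=\frac{Q^-_z}{Q^-(z,1)^{k+1}}.
\]
Summing over $Q\in\mathcal Q_D$ and reindexing by $Q^-$ then gives $\overline{\omega_{k+1,D}(-\overline z)}=\omega_{k+1,D}(z)$. The reindexing and the interchange of conjugation with the sum are justified by absolute, locally uniform convergence of the series for $k\ge2$. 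This convergence is the same fact used in the proof of Theorem \ref{prop:fkDraise}: we have $|Q_z|\le |Q(z,1)|/y$, which follows from \eqref{eqn:Qz2}, so the terms are dominated by those of $f_{k,D}$.

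The pointwise identity is a direct computation. Write $w:=-\overline z=-x+iy$, so that $\mathrm{Re}(w)=-x$, $\mathrm{Im}(w)=y$ and $|w|^2=|z|^2$. Then
\[
Q_w=\frac1y\left(a|z|^2-bx+c\right)=Q^-_z,
\]
and this quantity is real, so conjugation leaves it unchanged. For the denominator, since $a,b,c$ are real,
\[
\overline{Q(-\overline z,1)}=a\overline{\overline z^{\,2}}-b\overline{\overline z}+c=az^2-bz+c=Q^-(z,1),
\]
and therefore $\overline{Q(-\overline z,1)^{k+1}}=Q^-(z,1)^{k+1}$. Combining the numerator and denominator gives the termwise identity.

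There is no real obstacle in this argument. The only points that need care are that $Q_z$ is real-valued, so the numerator is unaffected by conjugation, and that absolute convergence allows the sum to be reordered.
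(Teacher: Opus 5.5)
Your proposal is correct and is the same argument as the paper's, which obtains the lemma from the fact that $[a,b,c]\mapsto[a,-b,c]$ permutes $\mathcal Q_D$. Your termwise computation ($Q_{-\overline z}=Q^-_z$ is real and $\overline{Q(-\overline z,1)}=Q^-(z,1)$), together with the absolute-convergence justification via $|Q_z|\le |Q(z,1)|/y$, fills in the details the paper leaves implicit.
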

We are now ready to prove Theorem \ref{thm:MRS}.
\begin{proof}[Proof of Theorem \ref{thm:MRS}]
(1) This is a direct consequence of Theorem \ref{prop:fkDraise} and Lemma \ref{lem:lower}.\\
(2) The claim follows by Theorem \ref{prop:fkDraise}, \eqref{Raising the sum}, and \eqref{fk}.\\
(3) Bruinier, Kohnen, and Ono
 \cite[Theorem 1.1]{BKO} showed that, for a weight $2k$ modular form $f$, we have 
\[
\frac{1}{2\pi i} f'=\frac{k}{6}E_2 f -f f^{\operatorname{div}}.
\]
By Theorem \ref{prop:fkDraise}, we have
$$
\omega_{k+1,D}(z) = \frac{i}{k}f_{k,D}'(z)+\frac{f_{k,D}(z)}{y}.
$$ 
From this we conclude the proof of part (3).
\noindent

\noindent
(4) Applying raising termwise to \eqref{eqn:Omegadef}, we obtain, from Theorem \ref{prop:fkDraise}, that 
\begin{equation}\label{eqn:LambdaRaise}
R_{2k,z}\left(\Omega_k(\tau,z)\right)=2k\sum_{D\ge 1}D^{k-\frac{1}{2}}\omega_{k+1,D}(z)e^{2\pi i D\tau}=2k\Lambda_k(\tau,z)
\end{equation}
 satisfies weight $2k+2$ modularity on $\SL_2(\Z)$ in $z$ and weight $k+\frac{1}{2}$ modularity on $\Gamma_0(4)$ in $\tau$, as noted in  \cite[Theorem 1.2]{MRS}. 

Applying raising to  \eqref{eqn:fkDthetalift} and using Theorem \ref{prop:fkDraise} and \eqref{eqn:LambdaRaise} yields 
\begin{equation}
\left\langle \Lambda_k(\cdot,z),P_{k+\frac{1}{2},D}^+\right\rangle=\frac{\Gamma\left(k-\frac{1}{2}\right)}{6(4\pi)^{k-\frac{1}{2}}} \omega_{k+1,D}(z).\label{eqn:lift}
\end{equation}
Since the Petersson inner product is conjugate-symmetric, \eqref{eqn:lift} implies that 

\[
\left\langle P_{k+\frac{1}{2},D}^+,\Lambda_k(\cdot,-\overline{z})\right\rangle = \frac{\Gamma\left(k-\frac{1}{2}\right)}{6(4\pi)^{k-\frac{1}{2}}}\overline{\omega_{k+1,D}(-\overline{z})}.
\]
The claim now follows by Lemma \ref{lem:omegaflip}.
\end{proof}

We next prove Theorem \ref{prop:omegaNegProp}.

\begin{proof}[Proof of Theorem \ref{prop:omegaNegProp}]
 
(1) Using Lemma \ref{lem:xilower} and then \eqref{Ver} and \eqref{eqn:Wdef}, for $z\notin E_D$, we obtain that
\[
L_{2-2k}\left(\mathcal{F}_{1-k,D}(z)\right)=y^{2k}\overline{\xi_{2-2k}\left(\mathcal{F}_{1-k,D}(z)\right)}=\frac{y^{2k}\overline{f_{k,D}(z)}}{\binom{2k-2}{k-1}\pi}=\frac{\mathcal{W}_{-k,D}(z)}{\binom{2k-2}{k-1}\pi}.
\]
This gives the claimed identity for $\mathcal{W}_{-k,D}$. Since $f_{k,D}$ is a weight $2k$ cusp form by \cite[Theorem in Appendix 2]{ZagierDoiNaganuma}, the modularity of $\mathcal{W}_{-k,D}$ follows immediately from Lemma \ref{lem:conjmodular}. Since $f_{k,D}$ is annihilated by $\Delta_{2k}$, the claimed eigenvalue follows immediately from Lemma \ref{lem:Deltaconj}.\\
(2) By Theorem \ref{prop:fkDraise}, \eqref{Ver}, and Lemma \ref{lem:xilower}, for every $z\notin E_D$ we have
\begin{align*}
\omega_{k+1,D}(z)=\frac{1}{2k}\binom{2k-2}{k-1}\pi R_{2k}\left(y^{2k} \overline{L_{2-2k}\left(\mathcal{F}_{1-k,D}(z)\right)}\right).
\end{align*}
Substituting part (1) into the previous identity we obtain, for $z \notin E_D$, 
\begin{equation}\label{eqn:omegaRW}
\omega_{k+1,D}(z)=\frac{1}{2k} R_{2k}\left(y^{2k} \overline{\mathcal{W}_{-k,D}(z)}\right).
\end{equation}
However, since both sides of \eqref{eqn:omegaRW} extend continuously across $E_D$, \eqref{eqn:omegaRW}  holds for all $z\in\H$. The claim now follows by Lemma \ref{lem:raisexi}.
\end{proof}

\section{Other eigenvalues and the proof of Theorems \ref{thm:raiselowersparam} and \ref{prop:omegasparam}}\label{sec:sparam}
We now recall the spectral families introduced in \cite{BKV}. For $k\in\Z$, the authors and Viazovska \cite[(1.3)]{BKV}, defined, for $\re(s)\gg 1$,
\begin{equation}\label{eqn:fksDdef}
f_{k,s,D}(z):=\sum_{Q\in\mathcal{Q}_D}Q(z,1)^{-k}\varphi_{k,s}\left(\frac{Dy^2}{|Q(z,1)|^2}\right),
\end{equation}
where\footnote{We make the dependence on $k$ explicit since the weights vary throughout.}
\begin{equation}\label{eqn:phisdef}
\!\varphi_{k,s}(w):=\frac{\Gamma\!\left(s\!+\!\frac{k}{2}\!-\!\frac{1}{4}\right)\!D^{\frac{k}{2}+\frac{1}{4}}}{6\Gamma(2s)(4\pi)^{\frac{k}{2}-\frac{1}{4}}} w^{s-\frac{k}{2}-\frac{1}{4}} {_2F_1}\!\left(s+\frac{k}{2}-\frac{1}{4},s-\frac{k}{2}-\frac{1}{4};2s;w\right)\!.\!
\end{equation}
Here ${_2F_1}$ is Gauss's hypergeometric function. Following \cite[(1.7)]{BKV}, for $\ell\in\Z$ and $\re(s)\gg1$ we furthermore define (with $\ell=1-k$ there) 
\begin{equation}\label{eqn:calFsDdef}
\mathcal{F}_{\ell,s,D}(z):=\sum_{Q\in\mathcal{Q}_D}\frac{\sgn\left(Q_z\right)}{Q(z,1)^{\ell}}\psi_{\ell,s}\left(\frac{Dy^2}{|Q(z,1)|^2}\right)
\end{equation}
with 
\begin{equation}\label{eqn:phi*sdef}
\psi_{\ell,s}(w):=\frac{\Gamma\left(s-\frac{\ell}{2}+\frac{1}{4}\right)(4\pi D)^{\frac{\ell}{2}+\frac{1}{4}}}{12\sqrt{\pi}\Gamma(2s) w^{\frac{\ell}{2}+\frac{1}{4}-s}}{_2F_1}\left(s+\frac{\ell}{2}-\frac{1}{4},s-\frac{\ell}{2}-\frac{1}{4};2s;w\right).
\end{equation}
 For $k\in 2\N$, the functions $f_{k,s,D}$ and $\mathcal{F}_{1-k,s,D}$ were shown to be local Maass forms in  \cite[Theorem 1.1 (1) and Theorem 1.3 (1)]{BKV}.
\begin{lem}\label{lem:fFlocalMaass}
If $k\in 2\N$, then $f_{k,s,D}$ (resp. $\mathcal{F}_{1-k,s,D}$) is a local Maass forms of weight $2k$ (resp. $2-2k$) with exceptional set $E_D$ and eigenvalue $4\lambda_{k,s}$, where
\begin{equation}\label{eqn:lambdadef}
\lambda_{k,s}:=\left(s-\frac{k}{2}-\frac{1}{4}\right)\left(1-s-\frac{k}{2}-\frac{1}{4}\right).
\end{equation}
\end{lem}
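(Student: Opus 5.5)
The statement is quoted from \cite[Theorem 1.1 (1) and Theorem 1.3 (1)]{BKV}, but I sketch a direct argument. I would split it into modularity and the eigenvalue equation, working one term at a time in the series \eqref{eqn:fksDdef} and \eqref{eqn:calFsDdef}.

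\textbf{Modularity.} For $\gamma=\left(\begin{smallmatrix}a&b\\c&d\end{smallmatrix}\right)\in\SL_2(\Z)$ I would use the standard transformation rules
\[
Q(\gamma z,1)=(cz+d)^{-2}(Q\circ\gamma)(z,1),\qquad \operatorname{Im}(\gamma z)=\frac{y}{|cz+d|^2}.
\]
Consequently the invariant $w_Q(z):=\frac{Dy^2}{|Q(z,1)|^2}$ satisfies $w_Q(\gamma z)=w_{Q\circ\gamma}(z)$. Likewise, by \eqref{eqn:Qz2} and continuity, $Q_{\gamma z}=(Q\circ\gamma)_z$, so the sign factor is preserved. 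The map $Q\mapsto Q\circ\gamma$ permutes $\mathcal Q_D$, and the series converge absolutely locally uniformly for $\re(s)\gg1$. Reindexing the sum therefore gives weight $2k$ (resp. $2-2k$) modularity.

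\textbf{Eigenvalue.} The main computation is to show that each term $g_Q(z)=Q(z,1)^{-\kappa/2}\phi(w_Q(z))$ satisfies $\Delta_\kappa g_Q=4\lambda_{k,s}g_Q$ away from $Q_z=0$. By modularity of this construction I may move $Q$ to a convenient representative, for instance $Q=[0,\sqrt D,0]$ after a real change of variables, or simply compute in general.

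\begin{itemize}
\item Apply $\Delta_\kappa$ to $Q(z,1)^{-\kappa/2}\phi(w)$ using the derivatives of $w$ from Lemma \ref{lem:Qdiff} together with \eqref{eqn:Qz2}.
\item The result has the form $Q(z,1)^{-\kappa/2}$ times a second-order ODE operator in $w$ applied to $\phi$.
\item I expect this operator to be $-4\big(w^2(1-w)\phi''+\alpha w\phi'+\beta\phi\big)$ for suitable $\alpha,\beta$ depending on $\kappa$.
\item Substituting $\phi(w)=w^{s-\frac k2-\frac14}F(w)$ should reduce the equation to the hypergeometric equation with parameters $\big(s+\frac k2-\frac14,\,s-\frac k2-\frac14;\,2s\big)$, producing the eigenvalue $4\lambda_{k,s}$.
\end{itemize}

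For $\mathcal F_{1-k,s,D}$ the same computation applies with $\kappa=2-2k$. Here $\ell=1-k$ in \eqref{eqn:phi*sdef} gives parameters $\big(s+\frac{1-k}{2}-\frac14,\,s-\frac{1-k}{2}-\frac14;\,2s\big)$. Since $\lambda_{k,s}$ is invariant under $s\mapsto 1-s$ and the parameter sets correspond correctly, the eigenvalue again comes out as $4\lambda_{k,s}$. The factor $\sgn(Q_z)$ is locally constant off $E_D$ and is harmless there.

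\textbf{Growth and exceptional set.} Local uniform convergence (for $\re(s)\gg1$) justifies applying $\Delta$ termwise off $E_D$, and shows real-analyticity there. Growth at $i\infty$ is at most polynomial because $w_Q\to0$ as $y\to\infty$ and $\phi(w)\sim w^{s-\frac k2-\frac14}$ there. Discontinuities can only occur on $E_D$ through the sign factor and the non-smoothness of $\phi$ at $w=1$; note $w=1$ exactly when $Q_z=0$, by \eqref{eqn:Qz2}.

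The main obstacle is the explicit Laplacian computation reducing to the hypergeometric ODE, where constants and signs must be tracked carefully.
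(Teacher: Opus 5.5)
The paper gives no argument for this lemma; it imports it from \cite[Theorem 1.1 (1) and Theorem 1.3 (1)]{BKV}. Your direct verification is a legitimate self-contained alternative: reindex by $Q\mapsto Q\circ\gamma$ for modularity, then compute the Laplacian termwise and reduce to the hypergeometric equation. The computation does go through, but not with the operator you predict. Write $w=\frac{Dy^2}{|Q(z,1)|^2}$ and use $\Delta_{2m}=-L_{2m+2}R_{2m}-2m$, Lemma \ref{lem:fkDraise}, Lemma \ref{lem:Qdiff}, $-2iy^2\frac{\partial}{\partial\overline z}Q_z=-Q(z,1)$ and $\frac{y^2Q_z^2}{Q(\overline z,1)}=(1-w)Q(z,1)$. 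One finds
\[
\Delta_{2m}\left(Q(z,1)^{-m}\phi(w)\right)=Q(z,1)^{-m}\left(-4w^2(1-w)\phi''(w)-2w\left(2m+1-2(m+1)w\right)\phi'(w)\right).
\]
So in general the coefficient of $\phi'$ is not a constant multiple of $w$, and there is no zeroth-order term. Now put $\phi=w^{\mu}F$ with $\mu=s-\frac m2-\frac14$. The $w^{-1}F$-term cancels exactly when the eigenvalue equals $-4\mu(\mu+m-\frac12)=4\lambda_{m,s}$. What remains is the hypergeometric equation with parameters $(s+\frac m2-\frac14,\,s-\frac m2-\frac14;\,2s)$. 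Take $m=k$ and $m=1-k$ and use $\lambda_{1-k,s}=\lambda_{k,s}$; this is the symmetry \eqref{eqn:lambdasymmetry} you actually need, rather than $s\mapsto 1-s$.

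Your route buys uniformity: nothing in it uses $k>0$. The same computation therefore proves Lemma \ref{lem:flipweight} (3) for all $k$ directly. The paper instead has to go through Lemma \ref{lem:flipweight} (1), (2) and the conjugation lemmas, because it only has the cited case $k\in 2\N$. The paper's route buys brevity by leaning on the literature.

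Two details in your sketch need repair.

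\emph{Modularity.} From \eqref{eqn:Qz2} you only get $Q_{\gamma z}=\pm(Q\circ\gamma)_z$. Continuity cannot fix the sign, because both sides vanish along a geodesic and could differ by a sign on one side of it. Use real-analyticity on the connected set $\H$, or verify the identity directly on generators.

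\emph{Growth.} Your argument is incorrect as stated. For square $D$ the forms $[0,\pm\sqrt D,c]$ satisfy $w_Q\to 1$, not $0$, as $y\to\infty$. In any case termwise limits do not control an infinite sum. Use a uniform majorant instead:
\[
\left|Q(z,1)^{-m}\phi(w)\right|\ll y^{-m}\left(Q_z^2+D\right)^{\frac14-\re(s)}.
\]
This follows from $|Q(z,1)|^2=y^2(Q_z^2+D)$ and from $|\phi(w)|\ll w^{\re(\mu)}$ on $[0,1]$. The latter holds because the ${}_2F_1$-factor is bounded there, since $c-a-b=\frac12>0$. Summing this bound over $\mathcal Q_D$ gives polynomial growth in $y$.
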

We require the following relations to investigate the properties of $f_{k,s,D}$ (resp. $\mathcal{F}_{1-k,s,D}$) for $k\in -2\N$.

\begin{lem}\label{lem:flipweight}
\noindent

\noindent
\begin{enumerate}[leftmargin=*]
\item For $k\in\Z$ and $\re(s) \gg 1$,   we have 
\begin{align*}
f_{-k,s,D}(z)&=\frac{\Gamma\left(s-\frac{k}{2}-\frac{1}{4}\right) }{\Gamma\left(s+\frac{k}{2}-\frac{1}{4}\right)} (4\pi)^k y^{2k} f_{k,s,D}\left(-\overline{z}\right),\\
\mathcal{F}_{-k,s,D}(z)&=\frac{\Gamma\left(s+\frac{k}{2}+\frac{1}{4}\right) }{\Gamma\left(s-\frac{k}{2}+\frac{1}{4}\right)}(4\pi)^{-k} y^{2k} \mathcal{F}_{k,s,D}\left(-\overline{z}\right).
\end{align*}
\item For $k\in\Z$ and $\re(s)\gg1$, we have  
\[
\overline{f_{k,\overline{s},D}\left(-\overline{z}\right)}=f_{k,s,D}(z), \qquad \overline{\mathcal{F}_{k,\overline{s},D}\left(-\overline{z}\right)}=\mathcal{F}_{k,s,D}(z).
\]
\item For $k\in\Z\setminus\{0,1\}$ and $\re(s)\gg1$, the functions $f_{k,s,D}$ and $\mathcal{F}_{k,s,D}$ are local Maass forms of weight $2k$ with eigenvalue  $4\lambda_{k,s}$ and exceptional set $E_D$. We have $f_{k,s,D}=0$ if $k$ is odd and $\mathcal{F}_{k,s,D}=0$ if $k$ is even. 
\end{enumerate}
\end{lem}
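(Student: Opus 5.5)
We prove the three parts in the order (2), (1), (3), since (1) and (3) reduce to termwise identities and to the already known case $k\in 2\N$.

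\textbf{Part (2).} This is a termwise symmetry. The map $[a,b,c]\mapsto[a,-b,c]$ is a bijection of $\mathcal{Q}_D$. For $Q=[a,b,c]$ and $\widetilde Q=[a,-b,c]$ we check directly that
\[
Q(-\overline z,1)=\overline{\widetilde Q(z,1)},\qquad Q_{-\overline z}=\widetilde Q_z,\qquad \frac{Dy^2}{|Q(-\overline z,1)|^2}=\frac{Dy^2}{|\widetilde Q(z,1)|^2}.
\]
The hypergeometric function has real parameters up to the conjugation $s\mapsto\overline s$. Hence $\overline{\varphi_{k,\overline s}(w)}=\varphi_{k,s}(w)$ for real $w\in(0,1)$, and likewise for $\psi_{k,s}$. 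Conjugating the series \eqref{eqn:fksDdef} and \eqref{eqn:calFsDdef} term by term and reindexing gives (2).

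\textbf{Part (1).} Again we argue termwise. We rewrite
\[
Q(z,1)^{k}=\frac{|Q(z,1)|^{2k}}{\overline{Q(z,1)}^{\,k}}=\frac{D^k y^{2k}}{w^k\,\overline{Q(z,1)}^{\,k}},\qquad w:=\frac{Dy^2}{|Q(z,1)|^2}.
\]
Combined with the identity for $Q(-\overline z,1)$ from part (2), this gives
\[
Q(z,1)^{k}\varphi_{-k,s}(w)=y^{2k}\,\widetilde Q(-\overline z,1)^{-k}\,D^k w^{-k}\varphi_{-k,s}(w).
\]
The key step is to apply Euler's transformation ${}_2F_1(a,b;c;w)=(1-w)^{c-a-b}{}_2F_1(c-a,c-b;c;w)$. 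Note that swapping $k\mapsto -k$ simply permutes the two upper parameters, so we only need to compare $w^{s+\frac k2-\frac14}$ with $D^k w^{-k}\cdot w^{s-\frac k2-\frac14}$. These agree, since $w^{s+\frac k2-\frac14}=w^{-k}\,w^{s+\frac{3k}{2}-\frac14}$ once the constants are collected. It therefore suffices to compare the prefactors in \eqref{eqn:phisdef}, which gives the ratio $\Gamma(s-\frac k2-\frac14)(4\pi)^k/\Gamma(s+\frac k2-\frac14)$ with the $D$-powers cancelling. For $\mathcal F$ we do the same, noting that $\sgn(Q_z)=\sgn(\widetilde Q_{-\overline z})$. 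I expect that bookkeeping of the exponents of $w$ and $D$ will be the only fiddly point, and I would check it carefully.

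\textbf{Part (3).} For $k$ odd, the involution $Q\mapsto -Q$ sends the term $Q(z,1)^{-k}\varphi(w)$ to its negative, since $w$ is invariant. So $f_{k,s,D}=0$. For $k$ even, $\sgn(Q_z)$ flips sign under the same involution, so $\mathcal F_{k,s,D}=0$.

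For $k\in2\N$, Lemma \ref{lem:fFlocalMaass} gives the claim for $f_{k,s,D}$ directly. It also gives the claim for $\mathcal F_{1-k,s,D}$, which covers all odd negative weights $\ell\le -1$. Writing $1-k=-\ell'$ with $\ell'$ odd, part (1) expresses $\mathcal F_{\ell',s,D}$ (for $\ell'\geq 3$ odd) as a constant times $y^{-2\ell'}\mathcal F_{-\ell',s,D}(-\overline z)$. Similarly $f_{-k,s,D}$ is a constant times $y^{2k}f_{k,s,D}(-\overline z)$. Thus the remaining cases, namely $f$ in negative even weight and $\mathcal F$ in positive odd weight, follow from the known ones by the operation $g\mapsto y^{\kappa}g(-\overline z)$. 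By part (2) this equals $y^\kappa\overline{g_{\overline s}}$.

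Lemmas \ref{lem:conjmodular} and \ref{lem:Deltaconj} then give modularity of weight $-\kappa$ and eigenvalue $\overline{4\lambda_{k,\overline s}}+\kappa=4\lambda_{k,s}+2k$, where the weight is $\kappa=2k$. A direct expansion shows
\[
4\lambda_{k,s}+2k=4\lambda_{-k,s},
\]
which is the required eigenvalue. The exceptional set is preserved because $E_D$ is invariant under $z\mapsto-\overline z$, and growth at $i\infty$ is unaffected.
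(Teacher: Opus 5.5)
Your proposal follows the paper's proof in all three parts: the termwise comparison of $\varphi_{\pm k,s}$ and $\psi_{\pm k,s}$ using only ${}_2F_1(a,b;c;w)={}_2F_1(b,a;c;w)$ and the reindexing $[a,b,c]\mapsto[a,-b,c]$; the involution $Q\mapsto -Q$ for the vanishing statements; and the transfer via Lemmas \ref{lem:conjmodular} and \ref{lem:Deltaconj} together with $4\lambda_{k,s}+2k=4\lambda_{-k,s}$. Three small repairs are needed. First, Euler's transformation plays no role in part (1). Second, the exponent check there should read $w^{-k}\cdot w^{s+\frac k2-\frac14}=w^{s-\frac k2-\frac14}$, matching the power of $w$ in $\varphi_{k,s}$; your comparison puts $w^{-k}$ on the wrong side and is false as written, although the constant you end up with is correct. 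Third, reading off the eigenvalue $4\lambda_{1-k,s}$ of $\mathcal F_{1-k,s,D}$ from Lemma \ref{lem:fFlocalMaass} requires the symmetry $\lambda_{1-k,s}=\lambda_{k,s}$, which you should state.
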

\begin{proof}
(1) First note that the ${_2F_1}$-factor in \eqref{eqn:phisdef} is invariant under $k\mapsto -k$, since ${_2F_1}(a,b;c;w)={_2F_1}(b,a;c;w)$. Hence we obtain 
\[
\varphi_{-k,s}(w)= \frac{\Gamma\left(s-\frac{k}{2}-\frac{1}{4}\right)\left(4\pi w\right)^k }{\Gamma\left(s+\frac{k}{2}-\frac{1}{4}\right)D^k} \varphi_{k,s}(w).
\]
Substituting this into \eqref{eqn:fksDdef} and applying the change of variables $Q=[a,b,c]\mapsto Q^*:=[a,-b,c]$, which preserves $Q_D$, gives the first identity in (1). 

The same argument applied to \eqref{eqn:phi*sdef} gives
\[
\psi_{-k,s}(w)= \frac{\Gamma\left(s+\frac{k}{2}+\frac{1}{4}\right) w^{k}}{\Gamma\left(s-\frac{k}{2}+\frac{1}{4}\right) (4\pi D)^k} \psi_{k,s}(w).
\]
The second claim then follows by plugging into \eqref{eqn:calFsDdef}.

\noindent
(2) Note that $\overline{\varphi_{k,\overline{s}}}(w)=\varphi_{k,s}(w)$ for $w\in \R^+$. Plugging this into \eqref{eqn:fksDdef}, we have 
\[
\overline{f_{k,\overline{s},D}(-\overline{z})}=\sum_{Q\in\mathcal{Q}_D}\overline{Q(-\overline{z},1)}^{-k}\varphi_{k,s}\left(\frac{Dy^2}{|Q(-\overline{z},1)|^2}\right).
\]
Applying the same change of variables $Q\mapsto Q^*$ gives the first identity in (2). Similarly, using $\overline{\psi_{k,\overline{s}}(w)} = \psi_{k,s}(w)$ yields the second identity in (2).

\noindent
(3) The fact that $f_{k,s,D}=0$ if $k$ is odd and $\mathcal{F}_{k,s,D}=0$ if $k$ is even follows from letting  $Q\mapsto -Q$. It hence remains to show the claim for $f_{k,s,D}$ with $k\in 2\Z\setminus\{0\}$ and the claim for $\mathcal{F}_{k,s,D}$ with $k\in(1+2\Z)\setminus\{1\}$.
Lemma \ref{lem:fFlocalMaass} gives the claim for $f_{k,s,D}$ with $k\in 2\N$.  Using parts (1) and then (2), we next write
\[
f_{-k,s,D}(z)=\frac{\Gamma\left(s-\frac{k}{2}-\frac{1}{4}\right) }{\Gamma\left(s+\frac{k}{2}-\frac{1}{4}\right)} (4\pi)^k y^{2k}\overline{f_{k,\overline{s},D}(z)}.
\]
Since $f_{k,\overline{s},D}$ satisfies weight $2k$ modularity by \cite[Theorem 1.1 (1)]{BKV}, Lemma~\ref{lem:conjmodular} implies that the right-hand side is modular of weight $-2k$. Moreover, since $f_{k,\overline{s},D}$ has eigenvalue $4\lambda_{k,\overline{s}}$ away from $E_D$ by Lemma \ref{lem:fFlocalMaass}, we see from Lemma \ref{lem:Deltaconj} that $f_{-k,s,D}$ has eigenvalue $4\overline{\lambda_{k,\overline{s}}} + 2k$ away from $E_D$. Noting that $\overline{\lambda_{k,\overline{s}}}=\lambda_{k,s}$, the claim for $f_{-k,s,D}$ follows because 
\begin{equation}\label{eqn:lambda-k,s}
4\lambda_{k,s}+2k = 4\lambda_{-k,s}.
\end{equation}

The claim for $\mathcal{F}_{k,s,D}$ for $k\in 1-2\N$ follows by Lemma \ref{lem:fFlocalMaass} and the symmetry
\begin{equation}\label{eqn:lambdasymmetry}
\lambda_{1-k,s}=\lambda_{k,s}.
\end{equation}

It remains to treat $\mathcal{F}_{-k,s,D}$ (with $k\in 1-2\N$ as above). Using parts (1) and then (2), we have
\[
\mathcal{F}_{-k,s,D}(z)=\frac{\Gamma\left(s+\frac{k}{2}+\frac{1}{4}\right) }{\Gamma\left(s-\frac{k}{2}+\frac{1}{4}\right)} (4\pi)^{-k} y^{2k} \overline{\mathcal{F}_{k,\overline{s},D}(z)}.
\]
Since $k\in 1-2\N$, Lemma \ref{lem:fFlocalMaass} together with Lemma \ref{lem:conjmodular} implies that the right-hand side is modular of weight $-2k$ and Lemma \ref{lem:Deltaconj} implies that it has eigenvalue $4\overline{\lambda_{k,\overline{s}}} + 2k=4\lambda_{k,s}+2k$ under $\Delta_{-2k}$ outside of the exceptional set $E_D$. The claim for $\mathcal{F}_{-k,s,D}$ now follows by \eqref{eqn:lambda-k,s}.
\end{proof}

We are now ready to prove Theorem \ref{thm:raiselowersparam}.
 
\begin{proof}[Proof of Theorem \ref{thm:raiselowersparam}]
Assume throughout $z\notin E_D$, so that the relevant functions are real-analytic near $z$.\\ (1) Suppose that $\re(s)\gg1$ so \eqref{eqn:fksDdef} converges compactly. Then, since (in any neighborhood where $f$ and $g$ are real-analytic) 
\begin{equation}\label{Rfg}
	R_{2k}(fg)=gR_{2k}(f)+fR_0(g),	
\end{equation}
we have 
\begin{multline}\label{eqn:raisesparam}
R_{2k}\left(f_{k,s,D}(z)\right) = \sum_{Q\in\mathcal{Q}_D} R_{2k}\left(\frac{1}{Q(z,1)^{k}}\right)\varphi_{k,s}\left(\frac{Dy^2}{|Q(z,1)|^2}\right)\\
+ \sum_{Q\in\mathcal{Q}_D}Q(z,1)^{-k} R_0\left(\varphi_{k,s}\left(\frac{Dy^2}{|Q(z,1)|^2}\right)\right).
\end{multline}
We evaluate the two terms in \eqref{eqn:raisesparam} separately. By Lemma \ref{lem:fkDraise}, the first term in \eqref{eqn:raisesparam} is 
\begin{equation}\label{eqn:raisesparam-1st}
2k\sum_{Q\in\mathcal{Q}_D} \frac{Q_z}{Q(z,1)^{k+1}}\varphi_{k,s}\left(\frac{Dy^2}{|Q(z,1)|^2}\right).
\end{equation}

To evaluate the second term in \eqref{eqn:raisesparam}, we first rewrite
\[
R_0\left(\varphi_{k,s}\left(\frac{Dy^2}{|Q(z,1)|^2}\right)\right)=R_0\left(\frac{Dy^2}{|Q(z,1)|^2}\right)\varphi_{k,s}'\left(\frac{Dy^2}{|Q(z,1)|^2}\right). 
\]
Thus, using Lemma \ref{lem:Qdiff} (1), the second term in \eqref{eqn:raisesparam} equals 
\begin{equation}\label{eqn:raisesparam2nd}
2 \sum_{Q\in\mathcal{Q}_D}\frac{Q_z}{Q(z,1)^{k+1}} \frac{Dy^2}{|Q(z,1)|^2}\varphi_{k,s}'\left(\frac{Dy^2}{|Q(z,1)|^2}\right).
\end{equation}
It remains to compute $\varphi_{k,s}'$. We have
\begin{equation}\label{eqn:phisprime}
w\varphi_{k,s}'(w) = \frac{\Gamma\!\left(s+\frac{k}{2}-\frac{1}{4}\right) D^{\frac{k}{2}+\frac{1}{4}}}{6\Gamma(2s)(4\pi)^{\frac{k}{2}-\frac{1}{4}}} \left[w \frac{d}{dw} \left(w^b {}_2F_1(a,b;2s;w)\right)\right]_{\substack{a=s+\frac{k}{2}-\frac{1}{4}\\ b=s-\frac{k}{2}-\frac{1}{4}}}.
\end{equation}
By \cite[15.5.3]{NIST} with $n=1$, we have
\begin{equation*}
	w \frac{d}{dw} \left(w^a {}_2F_1(a,b;c;w)\right) = a w^a {}_2 F_1(a+1,b;c;w).
\end{equation*}
Noting that ${}_2F_1(a,b;c;w) = {}_2F_1(b,a;c;w)$, we thus obtain 
\begin{multline}\label{eqn:xdx2F1}
	\left[w\frac{d}{dw}\left(w^b {}_2F_1(a,b;2s;w)\right)\right]_{\substack{a=s+\frac{k}{2}-\frac{1}{4} \\ b=s-\frac{k}{2}-\frac{1}{4}}}  \\
	= \left(s-\frac{k}{2}-\frac{1}{4}\right) w^{s-\frac{k}{2}-\frac{1}{4}} {}_2F_1\!\left(s+\frac{k}{2}-\frac{1}{4},s-\frac{k}{2}+\frac{3}{4};2s;w\right).
\end{multline}
Therefore, \eqref{eqn:phisprime} becomes 
\begin{multline*}
 \frac{\left(s\!-\!\frac{k}{2}\!-\!\frac{1}{4}\right)\Gamma\!\left(s\!+\!\frac{k}{2}\!-\!\frac{1}{4}\right) D^{\frac{k}{2}+\frac{1}{4}}}{6\Gamma(2s)(4\pi)^{\frac{k}{2}-\frac{1}{4}}} w^{s-\frac{k}{2}-\frac{1}{4}} {}_2F_1\!\left(s+\frac{k}{2}-\frac{1}{4},s-\frac{k}{2}+\frac{3}{4};2s;w\right)\!.\!\!\!\!
\end{multline*}
Plugging this into (\ref{eqn:phisprime}) and then into (\ref{eqn:raisesparam2nd}) yields that (\ref{eqn:raisesparam2nd}) equals
\begin{multline*}
\frac{\left(s-\frac{k}{2}-\frac{1}{4}\right)\Gamma\!\left(s+\frac{k}{2}-\frac{1}{4}\right) D^{\frac{k}{2}+\frac{1}{4}}}{3\Gamma(2s)(4\pi)^{\frac{k}{2}-\frac{1}{4}}} \sum_{Q\in\mathcal{Q}_D}\frac{Q_z}{Q(z,1)^{k+1}} \left(\frac{Dy^2}{|Q(z,1)|^2}\right)^{s-\frac{k}{2}-\frac{1}{4}}\\
\times {}_2F_1\!\left(s+\frac{k}{2}-\frac{1}{4},s-\frac{k}{2}+\frac{3}{4};2s;\frac{Dy^2}{|Q(z,1)|^2}\right).
\end{multline*}
Inserting this and \eqref{eqn:raisesparam-1st} into \eqref{eqn:raisesparam} then yields 
\begin{align}
&R_{2k}\left(f_{k,s,D}(z)\right)\nonumber\\
&\ \ =\sum_{Q\in\mathcal{Q}_D} \frac{Q_z}{Q(z,1)^{k+1}}\Bigg[2k\varphi_{k,s}\left(\frac{Dy^2}{|Q(z,1)|^2}\right)+ \frac{\left(s-\frac{k}{2}-\frac{1}{4}\right)\Gamma\!\left(s+\frac{k}{2}-\frac{1}{4}\right) D^{\frac{k}{2}+\frac{1}{4}}}{3\Gamma(2s)(4\pi)^{\frac{k}{2}-\frac{1}{4}}}\nonumber\\
&\quad\ \times \left(\frac{Dy^2}{|Q(z,1)|^2}\right)^{s-\frac{k}{2}-\frac{1}{4}} {}_2F_1\!\left(s+\frac{k}{2}-\frac{1}{4},s-\frac{k}{2}+\frac{3}{4};2s;\frac{Dy^2}{|Q(z,1)|^2}\right)\Bigg].\!\label{eqn:raisesparam-final}
\end{align}
We now rewrite the bracketed term so the resulting summand matches the definition of $\mathcal{F}_{k+1,s,D}$. Using \eqref{eqn:phisdef}, the terms inside the  brackets in \eqref{eqn:raisesparam-final} become 
\begin{align}
&\!\!\frac{\Gamma\left(s+\frac{k}{2}-\frac{1}{4}\right)D^{\frac{k}{2}+\frac{1}{4}}}{6\Gamma(2s)(4\pi)^{\frac{k}{2}-\frac{1}{4}}} \left(\frac{Dy^2}{|Q(z,1)|^2}\right)^{s-\frac{k}{2}-\frac{1}{4}}\nonumber\\
&\ \times\Bigg(2k {_2F_1}\left(s+\frac{k}{2}-\frac{1}{4},s-\frac{k}{2}-\frac{1}{4};2s;\frac{Dy^2}{|Q(z,1)|^2}\right)\nonumber\\
&\qquad\ +2 \left(s-\frac{k}{2}-\frac{1}{4}\right) {}_2F_1\!\left(s+\frac{k}{2}-\frac{1}{4},s-\frac{k}{2}+\frac{3}{4};2s;\frac{Dy^2}{|Q(z,1)|^2}\right)\Bigg).\label{eqn:raisesparam-expand}
\end{align}
We next recall \cite[15.5.12]{NIST}, which can be rewritten as 
\begin{equation*}
(a-b){_2F_1}(a,b;c;z)=a{_2F_1}(a+1,b;c;z)- b{_2F_1}(a,b+1;c;z). 
\end{equation*}
Using this with $a=s+\frac{k}{2}-\frac{1}{4}$, $b=s-\frac{k}{2}-\frac{1}{4}$, and $c=2s$, we obtain
\begin{multline}\label{rewrite1}
k {_2F_1}\left(s+\frac{k}{2}-\frac{1}{4},s-\frac{k}{2}-\frac{1}{4};2s;\frac{Dy^2}{|Q(z,1)|^2}\right)\\
=\left(s+\frac{k}{2}-\frac{1}{4}\right){_2F_1}\left(s+\frac{k}{2}+\frac{3}{4},s-\frac{k}{2}-\frac{1}{4};2s;\frac{Dy^2}{|Q(z,1)|^2}\right)\\
-\left(s-\frac{k}{2}-\frac{1}{4}\right){_2F_1}\left(s+\frac{k}{2}-\frac{1}{4},s-\frac{k}{2}+\frac{3}{4};2s;\frac{Dy^2}{|Q(z,1)|^2}\right).
\end{multline}
Inserting this into \eqref{eqn:raisesparam-expand}, using that $s\Gamma(s)=\Gamma(s+1)$, and then plugging
into \eqref{eqn:raisesparam-final}, we obtain 
\begin{align}
R_{2k}\left(f_{k,s,D}(z)\right)&=\frac{\Gamma\left(s+\frac{k}{2}+\frac{3}{4}\right)D^{\frac{k}{2}+\frac{1}{4}}}{3\Gamma(2s)(4\pi)^{\frac{k}{2}-\frac{1}{4}}}\sum_{Q\in\mathcal{Q}_D} \frac{Q_z}{Q(z,1)^{k+1}} \left(\frac{Dy^2}{|Q(z,1)|^2}\right)^{s-\frac{k}{2}-\frac{1}{4}}\nonumber\\
&\qquad\qquad\times{_2F_1}\left(s+\frac{k}{2}+\frac{3}{4},s-\frac{k}{2}-\frac{1}{4};2s;\frac{Dy^2}{|Q(z,1)|^2}\right)\nonumber \\
&=\frac{\Gamma\left(s+\frac{k}{2}+\frac{3}{4}\right)D^{\frac{k}{2}+\frac{3}{4}}}{3\Gamma(2s)(4\pi)^{\frac{k}{2}-\frac{1}{4}}}\sum_{Q\in\mathcal{Q}_D} \frac{\sgn\left(Q_z\right)}{Q(z,1)^{k+1}}\sqrt{1-\frac{Dy^2}{|Q(z,1)|^2}}\nonumber\\
&\hspace{-1.85cm}\times\left(\frac{Dy^2}{|Q(z,1)|^2}\right)^{s-\frac{k}{2}-\frac{3}{4}}{_2F_1}\left(s+\frac{k}{2}+\frac{3}{4},s-\frac{k}{2}-\frac{1}{4};2s;\frac{Dy^2}{|Q(z,1)|^2}\right),\label{eqn:raisesparam-final3}
\end{align}
using that, by \eqref{eqn:Qz2}, we have
\begin{equation}\label{eqn:Qz2-2}
	Q_z=\sqrt{D} \sgn\left(Q_z\right)\frac{\sqrt{1-\frac{Dy^2}{|Q(z,1)|^2}}}{\sqrt{\frac{Dy^2}{|Q(z,1)|^2}}}.
\end{equation}
It remains to put the hypergeometric factor into the form occurring in $\mathcal{F}_{k+1,s,D}$. We use Euler's transformation \cite[15.8.1]{NIST} 
\begin{equation}\label{eqn:Euler2F1}
	{}_2F_1(a,b;c;w) =(1-w)^{c-a-b} {}_2 F_1(c-a,c-b;c;w).
\end{equation}
Thus \eqref{eqn:raisesparam-final3} equals
\begin{multline*}
R_{2k}\left(f_{k,s,D}(z)\right)=\frac{\Gamma\left(s+\frac{k}{2}+\frac{3}{4}\right)D^{\frac{k}{2}+\frac{3}{4}}}{3\Gamma(2s)(4\pi)^{\frac{k}{2}-\frac{1}{4}}}\sum_{Q\in\mathcal{Q}_D} \frac{\sgn\left(Q_z\right)}{Q(z,1)^{k+1}}\left(\frac{Dy^2}{|Q(z,1)|^2}\right)^{s-\frac{k}{2}-\frac{3}{4}}\\
\times{_2F_1}\left(s-\frac{k}{2}-\frac{3}{4},s+\frac{k}{2}+\frac{1}{4};2s;\frac{Dy^2}{|Q(z,1)|^2}\right).\!\!\!
\end{multline*}
Noting that ${}_2 F_1 (a,b;c;w)={}_2F_1 (b,a;c;w)$ then implies the first identity in (1).

The proof of the second identity in part (1) is parallel. Applying $R_{2k}$ to \eqref{eqn:calFsDdef}, we obtain
\begin{equation}\label{RF}
	R_{2k} \left(\mathcal{F}_{k,s,D} (z)\right) = \sum_{Q\in \mathcal{Q}_D} \sgn(Q_z) R_{2k} \left(\frac{\psi_{k,s} \left(\frac{Dy^2}{\left|Q(z,1)\right|^2}\right)}{Q(z,1)^k}\right).
\end{equation}
Using the product rule \eqref{Rfg} yields 
\begin{equation*}
	R_{2k} \!\left(\frac{\psi_{k,s} \! \left(\frac{Dy^2}{\left|Q(z,1)\right|^2}\right)}{Q(z,1)^k}\right) \! = R_{2k} \! \left(Q(z,1)^{-k}\right) \psi_{k,s} \! \left(\frac{Dy^2}{\left|Q(z,1)\right|^2}\right) + \frac{R_0 \! \left(\psi_{k,s} \! \left(\frac{Dy^2}{\left|Q(z,1)\right|^2}\right)\!\right)}{Q(z,1)^k} \!.
\end{equation*}
By Lemma \ref{lem:fkDraise}, the first term contributes to \eqref{RF}
\begin{equation*}
	2k \sum_{Q\in \mathcal{Q}_D} \frac{|Q_z|}{Q(z,1)^{k+1}} \psi_{k,s} \left(\frac{Dy^2}{\left|Q(z,1)\right|^2}\right). 
\end{equation*}

For the second term we proceed as below  \eqref{eqn:raisesparam-1st} with $\varphi_{k,s}$ replaced with $\psi_{k,s}$, yielding as contribution to \eqref{RF} 
\begin{equation*}
	2\sum_{Q\in\mathcal{Q}_D } \frac{\left|Q_z\right|}{Q(z,1)^{k+1}} \left[w\psi_{k,s}'(w)\right]_{w= \frac{Dy^2}{\left|Q(z,1)\right|^2}}.
\end{equation*}
Combining the two contributions and using \eqref{eqn:Qz2-2} and plugging into \eqref{RF}, we obtain
\begin{multline*}
	R_{2k}\left(\mathcal F_{k,s,D}(z)\right) \\
	= 2\sqrt{D} \sum_{Q\in\mathcal Q_D} \frac{1}{Q(z,1)^{k+1}} \left[\sqrt{\frac{1-w}{w}}\left(k\psi_{k,s}(w)+w\psi_{k,s}'(w)\right)\right]_{w=\frac{Dy^2}{|Q(z,1)|^2}}.
\end{multline*}

We next compute, using \eqref{eqn:phi*sdef} and \eqref{eqn:xdx2F1},
\begin{multline*}
	\!\!\!\!w\psi_{k,s}'(w) \\
	= \frac{\left(s\!-\!\frac{k}{2}\!-\!\frac{1}{4}\right)\!\Gamma\!\left(s\!-\!\frac{k}{2}\!+\!\frac{1}{4}\right)\!(4\pi D)^{\frac{k}{2} + \frac{1}{4}} }{12\sqrt{\pi} \Gamma(2s)}  w^{s-\frac{k}{2}-\frac{1}{4}} {}_2F_1 \!\left(s+\frac{k}{2}-\frac{1}{4}, s-\frac{k}{2}+ \frac{3}{4};2s;w\right)\!. 
\end{multline*}
Using \eqref{rewrite1} followed by \eqref{eqn:Euler2F1}, the hypergeometric factor becomes the one appearing in $f_{k+1,s,D}$. Comparing with ~\eqref{eqn:fksDdef} gives the second identity.

\noindent (2) By Lemma \ref{lem:flipweight} (3), for $k\in\Z\setminus\{0,1\}$, the functions $f_{k,s,D}$ and $\mathcal{F}_{k,s,D}$ have eigenvalue $4\lambda_{k,s}$ under $\Delta_{2k}$ (outside of $E_D$). Using \eqref{eqn:DeltaLR} with $-\kappa=2k$, we rewrite 
\begin{equation}\label{eqn:DeltafRewrite}
4\lambda_{k,s}f_{k,s,D}(z)=\Delta_{2k}(f_{k,s,D}(z))=-L_{2k+2}\circ R_{2k}\left(f_{k,s,D}(z) \right)-2k f_{k,s,D}(z).
\end{equation}
Substituting the first identity of part (1) into the first term on the right-hand side of \eqref{eqn:DeltafRewrite} and rearranging, we obtain  
\[
-\frac{2\Gamma\left(s+\frac{k}{2}+\frac{3}{4}\right)}{(4\pi)^k\Gamma\left(s-\frac{k}{2}-\frac{1}{4}\right)} L_{2k+2}\left(\mathcal{F}_{k+1,s,D}(z)\right) = \left(4\lambda_{k,s}+2k\right) f_{k,s,D}(z).
\]
Using \eqref{eqn:lambda-k,s} and \eqref{eqn:lambdadef}, we obtain 
\begin{equation*}
L_{2k+2}\left(\mathcal{F}_{k+1,s,D}(z)\right)= \frac{2(4\pi)^k\left(s-\frac{k}{2}-\frac{3}{4}\right)\Gamma\left(s-\frac{k}{2}-\frac{1}{4}\right)}{\Gamma\left(s+\frac{k}{2}-\frac{1}{4}\right)}f_{k,s,D}(z).
\end{equation*}
Letting $k\mapsto k-1$ gives the second identity. 
Similarly, \eqref{eqn:DeltaLR} implies that 
\begin{equation}\label{eqn:DeltaFRewrite}
4\lambda_{k,s}\mathcal{F}_{k,s,D}(z)\!=\!\Delta_{2k}\left(\mathcal{F}_{k,s,D}(z)\right)\!=\!-L_{2k+2}\circ R_{2k}\left(\mathcal{F}_{k,s,D}(z) \right)-2k \mathcal{F}_{k,s,D}(z).
\end{equation}
Plugging the second identity from (1) into \eqref{eqn:DeltaFRewrite} and rearranging, we have 
\[
-\frac{2(4\pi)^k\left(s+\frac{k}{2}-\frac{1}{4}\right)\Gamma\left(s-\frac{k}{2}+\frac{1}{4}\right)}{\Gamma\left(s+\frac{k}{2}+\frac{1}{4}\right)} L_{2k+2}\left(f_{k+1,s,D}(z)\right) = \left(4\lambda_{k,s}+2k\right) \mathcal{F}_{k,s,D}(z).
\]
Using \eqref{eqn:lambda-k,s}, \eqref{eqn:lambdasymmetry}, \eqref{eqn:lambdadef}, and $\Gamma(w+1)=w\Gamma(w)$ yields 
\begin{align*}
L_{2k+2}\!\left(f_{k+1,s,D}(z)\right)&= -\frac{2\Gamma\!\left(s\!+\!\frac{k}{2}\!+\!\frac{1}{4}\right)\lambda_{k+1,s}}{(4\pi)^k\left(s\!+\!\frac{k}{2}\!-\!\frac{1}{4}\right)\Gamma\!\left(s\!-\!\frac{k}{2}\!+\!\frac{1}{4}\right)}\mathcal{F}_{k,s,D}(z) \\
&= \frac{2\Gamma\!\left(s\!+\!\frac{k}{2}\!+\!\frac{1}{4}\right)}{(4\pi)^k\Gamma\!\left(s\!-\!\frac{k}{2}\!-\!\frac{3}{4}\right)}\mathcal{F}_{k,s,D}(z).
\end{align*}
Letting $k\mapsto k-1$ gives the first identity in (2). 
\end{proof}

We are now ready to prove Theorem \ref{prop:omegasparam}.

\begin{proof}[Proof of Theorem \ref{prop:omegasparam}]
Setting $\ell=k+1$ in \eqref{eqn:phi*sdef} and dividing by $\Gamma(s-\frac{k}{2}-\frac{1}{4})$, we obtain
\begin{equation}\label{Verweiss}
\lim_{s\to \frac{k}{2}+\frac{1}{4}}\frac{\psi_{k+1,s}(w)}{\Gamma\!\left(s-\frac{k}{2}-\frac{1}{4}\right)} = \frac{(4\pi D)^{\frac{3}{4}+\frac{k}{2}}}{12\sqrt{\pi}\Gamma\!\left(k+\frac{1}{2}\right) \sqrt{w}} {_2F_1}\!\left(k+\frac{1}{2},-\frac{1}{2};k+\frac{1}{2};w\right)\!.
\end{equation}
By \cite[15.4.6]{NIST}, we have 
\[
{_2F_1}\left(k+\frac{1}{2},-\frac{1}{2};k+\frac{1}{2};w\right)=\sqrt{1-w}.
\]
Thus \eqref{Verweiss} becomes, using $\Gamma\left(k+\frac{1}{2}\right)=\frac{\sqrt{\pi} (2k)!}{4^k k!}$ and \eqref{eqn:Qz2-2},
\begin{equation*}
\lim_{s\to \frac{k}{2}+\frac{1}{4}}\frac{\psi_{k+1,s}(w)}{\Gamma\left(s-\frac{k}{2}-\frac{1}{4}\right)}= \frac{(4\pi D)^{\frac{3}{4}+\frac{k}{2}}4^kk!}{12\pi(2k)!} \sqrt{\frac{1-w}{w}}.
\end{equation*}
Plugging in  $w=\frac{Dy^2}{|Q(z,1)|^2}$ to the right-hand side above and using \eqref{eqn:Qz2-2} then yields 
\[ 
\frac{4^kk!(4\pi D)^{\frac{3}{4}+\frac{k}{2}}\left|Q_z\right|}{12\pi(2k)!\sqrt{D}}.
\]
Substituting this into the defining identity \eqref{eqn:calFsDdef} completes the argument.
\end{proof}

\end{document}